\pgfplotsset{compat=newest}
\titlespacing*{\section}{0pt}{14pt}{4pt}
\titlespacing*{\subsection}{0pt}{8pt}{3pt}
\def\maketimestamp{\count255=\time
\divide\count255 by 60\relax
\edef\thetime{\the\count255:}%
\multiply\count255 by-60\relax
\advance\count255 by\time
\edef\thetime{\thetime\ifnum\count255<10 0\fi\the\count255}
\edef\thedate{\number\day-\ifcase\month\or Jan\or Feb\or Mar\or
             Apr\or May\or Jun\or Jul\or Aug\or Sep\or Oct\or
             Nov\or Dec\fi-\number\year}
\def\timstamp{\hbox to\hsize{\tt\hfil\thedate\hfil\thetime\hfil}}}
\numberwithin{equation}{section}  
\newtheorem{thm}{Theorem} 
\newtheorem{lem}[thm]{Lemma}
\newtheorem{prop}[thm]{Proposition}
\newtheorem{cor}[thm]{Corollary}
\theoremstyle{definition}
\newtheorem{conj}{Conjecture} 
\newtheorem*{conj*}{Conjecture} 
\theoremstyle{remark}
\newtheorem{rem}{Remark}
\DeclareMathOperator{\supp}{supp} %
\DeclareMathOperator{\exponential}{e}
\newcommandtwoopt{\gaborG}[3][a][b]{\mathcal{G}(#3,#1,#2)} 
\newcommand{\sfrac}[1]{F{(#1)}}
\newcommand{\round}[1]{R{(#1)}}
\newcommand{\myexp}[1]{\exponential^{#1}}
\newcommand*{\numbersys}[1]{\ensuremath{\mathbb{#1}}}
\newcommand*{\C}{\numbersys{C}}
\newcommand*{\R}{\numbersys{R}}
\newcommand*{\Z}{\numbersys{Z}}
\newcommand*{\N}{\numbersys{N}}
\newcommand*{\cF}{\mathcal{F}}
\newcommand*{\cG}{\mathcal{G}}
\newcommand{\itvoc}[2]{\ensuremath{\left({#1},{#2}\right]}} 
\newcommand{\itvcc}[2]{\ensuremath{\left[{#1},{#2}\right]}} %
\newcommand{\itvco}[2]{\ensuremath{\left[{#1},{#2}\right)}} %
\newcommand{\itvcos}[2]{\ensuremath{\lbrack{#1},{#2})}} %
\newcommand{\abs}[1]{\ensuremath{\left\lvert#1\right\rvert}}
\newcommand{\set}[1]{\ensuremath{\left\lbrace{#1}\right\rbrace}}
\newcommand{\setprop}[2]{\ensuremath{\left\lbrace{#1} : {#2}\right\rbrace}}
\newcommand{\floor}[1]{\left\lfloor #1 \right\rfloor}
\def\blfootnote{\xdef\@thefnmark{}\@footnotetext}
\def\subjclass{\xdef\@thefnmark{}\@footnotetext}
\long\def\symbolfootnote[#1]#2{\begingroup%
\def\thefootnote{\fnsymbol{footnote}}\footnote[#1]{#2}\endgroup}
  \renewenvironment{abstract}{%
      \titlepage
      \null\vfil
      \@beginparpenalty\@lowpenalty
      \begin{center}%
        \bfseries \abstractname
        \@endparpenalty\@M
      \end{center}}%
     {\par\vfil\null\endtitlepage}
  \renewenvironment{abstract}{%
      \if@twocolumn
        \section*{\abstractname}%
      \else
        \small
        \list{}{%
          \settowidth{\labelwidth}{\textbf{\abstractname:}}
          \setlength{\leftmargin}{50pt}
          \setlength{\rightmargin}{50pt}
          \setlength{\itemindent}{\labelwidth}
          \addtolength{\itemindent}{\labelsep}
        }
        \item[\textbf{\abstractname:}]

      \fi}
      {\if@twocolumn\else\endlist\fi}
\begin{document}

\title{Counterexamples to the B-spline conjecture for Gabor frames}

\date{\today}

 \author{Jakob Lemvig\footnote{Technical University of Denmark, Department of Applied Mathematics and Computer Science, Matematiktorvet 303B, 2800 Kgs.\ Lyngby, Denmark, E-mail: \protect\url{jakle@dtu.dk}}\phantom{$\ast$},
Kamilla Haahr Nielsen\footnote{Technical University of Denmark, Department of Applied Mathematics
     and Computer Science, Matematiktorvet 303B, 2800 Kgs.\ Lyngby, Denmark, E-mail:
     \protect\url{kamillahn@villagok.dk}}} 

 \blfootnote{2010 {\it Mathematics Subject Classification.} Primary
   42C15. Secondary: 42A60}
 \blfootnote{{\it Key words and phrases.} B-spline, frame, frame set, Gabor
   system, Zibulski-Zeevi matrix} 

\maketitle

\thispagestyle{plain}
\begin{abstract} 
  The frame set conjecture for B-splines $B_n$, $n \ge 2$, states that the frame set is the maximal set that avoids the known obstructions. We show that any hyperbola of the form $ab=r$, where $r$ is a rational number smaller than one and $a$ and $b$ denote the sampling and modulation rates, respectively, has infinitely many pieces, located around $b=2,3,\dots$, \emph{not} belonging to the frame set of the $n$th order B-spline. This, in turn, disproves the frame set conjecture for B-splines.  On the other hand, we uncover a new region belonging to the frame set for B-splines $B_n$, $n \ge 2$.
\end{abstract}

\section{Introduction}
\label{sec:introduction}

One of the fundamental problems in Gabor analysis is to determine for
which sampling and modulation rates, the corresponding time-frequency
shifts of a given generator constitute a frame. The so-called
\emph{frame set} of a generator $g \in L^2(\R)$ is the parameter
values $(a,b)\in\mathbb{R}_+^2$ for which the associated Gabor system
$\mathcal{G}(g,a,b):= \set{\myexp{2\pi i bm \cdot}g(\cdot-ak)}_{k,m\in
  \Z}$ is a frame for $L^2(\R)$. We denote the frame set by
$\mathcal{F}(g)$ and refer to \cite{MR1843717,MR1946982} for an
introduction to frames and Gabor analysis.

In this note we are interested in the frame set of Gabor systems
generated by B-splines $B_n$. The B-splines are given inductively as
\begin{equation*}
B_1 = \chi_{\itvcc{-1/2}{1/2}}, \quad \text{and} \quad
  B_{n+1} = B_{n} \ast B_{1}, \quad \text{for } n \in \N. 
\end{equation*}
Dai and Sun~\cite{DaiABC2015} recently gave a complete
characterization of the frame set for the first B-spline $B_1$. For
the higher order B-splines, the picture is a lot less complete. Since,
for $n\ge 2$, $B_n$ belongs to the Feichtinger algebra $M^1(\R)$, the
set $\cF(B_n)$ is open in $\R^2$ and $\cF(B_n) \subset
\setprop{(a,b)\in \mathbb{R}^2_+ }{ ab < 1}$, see e.g.,
\cite{MR3232589}.  Christensen put focus on the problem of
characterizing $\cF(B_n)$ in \cite{ChristensenNew2014}, while
Gr\"o{}chenig, in~\cite{MR3232589}, went one step further conjecturing
that the frame set for B-splines of order $n \geq 2$ is
$\mathcal{F}(B_n)=\setprop{(a,b)\in \mathbb{R}^2_+ }{ ab < 1, \, a<n,
  \, b \neq 2,3,\dots}$.  On the other hand, the set $\cF(B_1)$ has a
very complicated structure. This phenomenon is partly explained by the
fact that $B_1$ is the only B-spline that does not belong to
$M^1(\R)$. Indeed, $\cF(B_1)$ is not open and $(a,b)=(1,1) \in
\cF(B_1)$.

The main messages of this note are that the B-spline conjecture is
false and that the frame set for B-splines of all orders must have a
very complicated structure, sharing several similarities with
$\cF(B_1)$. 

Kloos and St\"ockler \cite{MR3218799} and Christensen, Kim, and
Kim~\cite{ChristensenOnGabor2015} reported positive results to support
of the frame set conjecture for B-splines, adding new parameter values
$(a,b)\in\mathbb{R}_+^2$ to the known parts of $\mathcal{F}(B_n)$;
these new values are illustrated in Figure~\ref{fig:frame-set-B2} for
the case of $B_2$.
\begin{figure}[h!]
        \centering
                 \begin{tikzpicture}[scale=3.5]

    \fill[white!20!green] (0,0) rectangle (2,1/2);

    \draw[style=help lines] (0,0) grid (3.1,3.1);

    \draw[->] (-0.2,0) -- (3.2,0) node[right] {$a$}; \draw[->]
    (0,-0.2) -- (0,3.1) node[above] {$b$};

    \foreach \x/\xtext in {1/\frac{n}{2}, 2/n, 3/\frac{3n}{2}}
    \draw[shift={(\x,0)}] (0pt,0pt) -- (0pt,-2pt) node[below]
    {$\xtext$};

    \foreach \y/\ytext in {0.5/\frac{1}{n}, 1/1, 2/2, 3/3}
    \draw[shift={(0,\y)}] (0pt,0pt) -- (-2pt,0pt) node[left]
    {$\ytext$};

    \draw[domain=2:4,black,smooth,variable=\t,line width=1] plot
    ({\t-2},{2/\t});
    \fill[xscale=1,blue,
    domain=0:2,smooth,variable=\t] (0,1/2) plot ({\t},{2/(\t+2)}) |-
    (0,1/2);    
   \draw[domain=2.333:2.6667,smooth,variable=\t,thick,purple] plot
   ({0.833/\t},\t);
   \draw[domain=1.75:2.25,smooth,variable=\t,thick,pink] plot ({0.5/\t},\t);
   \draw[domain=1.833:2.167,smooth,variable=\t,thick,pink] plot ({0.333/\t},\t);
   \draw[domain=1.833:2.167,smooth,variable=\t,thick,pink] plot
   ({0.666/\t},\t);
   \draw[domain=1.875:2.125,smooth,variable=\t,thick,pink] plot
   ({0.25/\t},\t);
   \draw[domain=1.875:2.125,smooth,variable=\t,thick,pink] plot
   ({0.75/\t},\t);
   \draw[domain=1.9:2.1,smooth,variable=\t,thick,pink] plot
   ({0.2/\t},\t);
   \draw[domain=1.9:2.1,smooth,variable=\t,thick,pink] plot
   ({0.4/\t},\t);
   \draw[domain=1.9:2.1,smooth,variable=\t,thick,pink] plot
   ({0.6/\t},\t);
   \draw[domain=1.9:2.1,smooth,variable=\t,thick,pink] plot
   ({0.8/\t},\t);
   \draw[domain=1.9167:2.0833,smooth,variable=\t,thick,pink] plot
   ({0.167/\t},\t);
   \draw[domain=1.9167:2.0833,smooth,variable=\t,thick,pink] plot
   ({0.833/\t},\t);
   \draw[domain=2.75:3.25,smooth,variable=\t,thick,pink] plot ({0.5/\t},\t);
   \draw[domain=2.833:3.167,smooth,variable=\t,thick,pink] plot ({0.333/\t},\t);
   \draw[domain=2.833:3.167,smooth,variable=\t,thick,pink] plot
   ({0.666/\t},\t);
   \draw[domain=2.875:3.125,smooth,variable=\t,thick,pink] plot
   ({0.25/\t},\t);
   \draw[domain=2.875:3.125,smooth,variable=\t,thick,pink] plot
   ({0.75/\t},\t);
   \draw[domain=2.9:3.1,smooth,variable=\t,thick,pink] plot
   ({0.2/\t},\t);
   \draw[domain=2.9:3.1,smooth,variable=\t,thick,pink] plot
   ({0.4/\t},\t);
   \draw[domain=2.9:3.1,smooth,variable=\t,thick,pink] plot
   ({0.6/\t},\t);
   \draw[domain=2.9:3.1,smooth,variable=\t,thick,pink] plot ({0.8/\t},\t);
   \draw[domain=2.9167:3.0833,smooth,variable=\t,thick,pink] plot
   ({0.167/\t},\t);
   \draw[domain=2.9167:3.0833,smooth,variable=\t,thick,pink] plot
   ({0.833/\t},\t);
    \fill[xscale=1,yellow, domain=1:2,smooth,variable=\t] (1,.5) plot ({\t}, {1/\t}) |- (1,.5);
    \draw[xscale=0.5,domain=1:2,smooth,variable=\t,dashed,thick] plot ({\t},{1/\t});
    \fill[xscale=0.5,yellow, domain=1:2,smooth,variable=\t] (1,.5) plot ({\t}, {1/\t}) |- (1,.5);
    \draw[xscale=0.25,domain=1:2,smooth,variable=\t,dashed,thick] plot ({\t},{1/\t});
    \fill[xscale=0.25,yellow, domain=1:2,smooth,variable=\t] (1,.5) plot ({\t}, {1/\t}) |- (1,.5);
    \draw[xscale=0.125,domain=1:2,smooth,variable=\t,dashed,thick] plot ({\t},{1/\t});
    \fill[xscale=0.125,yellow, domain=1:2,smooth,variable=\t] (1,.5) plot ({\t}, {1/\t}) |- (1,.5);
    \draw[xscale=0.0675,domain=1:2,smooth,variable=\t,dashed,thick] plot ({\t},{1/\t});
    \fill[xscale=0.0675,yellow, domain=1:2,smooth,variable=\t] (1,.5) plot ({\t}, {1/\t}) |- (1,.5);
    \draw[xscale=0.03125,domain=1:2,smooth,variable=\t,dashed,thick] plot ({\t},{1/\t});
    \fill[xscale=0.03125,yellow, domain=1:2,smooth,variable=\t] (1,.5) plot ({\t}, {1/\t}) |-
    (1,.5);
    \fill[xscale=1,red, domain=2:3.1,smooth,variable=\t] (2,0) plot ({\t}, {1/\t}) |- (2,0);
    \draw[domain=0.31:3.05,smooth,variable=\t,dashed,line width=1] plot ({\t},{1/\t}); 
    \draw[-,thick] (0,1/2) -- (2,1/2); 
    \draw[-,red,line width=1.5] (0,2) -- (1/2,2); 
    \draw[-,red,line width=1.5] (0,3) -- (1/3,3);
    \draw[-,dashed,line width=1] (2,0) -- (2,1/2); 
    \draw[-,line width=1.5,xscale=0.5,black!40!green] (1,1/2) -- (1,1); 
    \draw[-,line width=1.5,xscale=0.25,black!40!green] (1,1/2) -- (1,1);
    \draw[-,line width=1.5,xscale=0.125,black!40!green] (1,1/2) -- (1,1); 
    \draw[-,line width=1.5,xscale=0.0675,black!40!green] (1,1/2) -- (1,1);
    \draw[-,line width=1.5,xscale=0.03125,black!40!green] (1,1/2) -- (1,1); 
    \draw[-,line width=1.5,black!40!green] (0,1) -- (1,1);
    \draw[-,line width=1.7,black!40!green] (1,1/2) -- (1,1);
    \draw[domain=0.31:3.05,smooth,variable=\t,dashed,line width=1] plot ({\t},{1/\t});
  \end{tikzpicture}
  \caption{A sketch of the frame set for $B_n$, $n=2$, for
    $0<ab<1$. Red, pink and purple indicate $(a,b)$-values, where
    $\mathcal{G}(B_2,a,b)$ is not a frame. All other colors indicate
    the frame property. The green region is the classical ``painless
    expansions'' \cite{MR836025}, the yellow region is the result
    from~\cite{ChristensenOnGabor2015}, while the dark green lines
    follow from \cite{MR1756138} or \cite{MR3218799}. The blue region
    is the result in Theorem~\ref{thm:BsplinesNEW}. The pink hyperbola
    pieces $ab=\frac{p}{q}$ are the counterexamples from
    Theorem~\ref{thm:frame-set-Bn} (only illustrated for $q \le 6$).
    The purple
    hyperbolic curve through $(\tfrac{1}{3},\tfrac{5}{2})$ is the
    counterexamples in Theorem~\ref{thm:NEWnonFrame}.
}
\label{fig:frame-set-B2}
\end{figure}
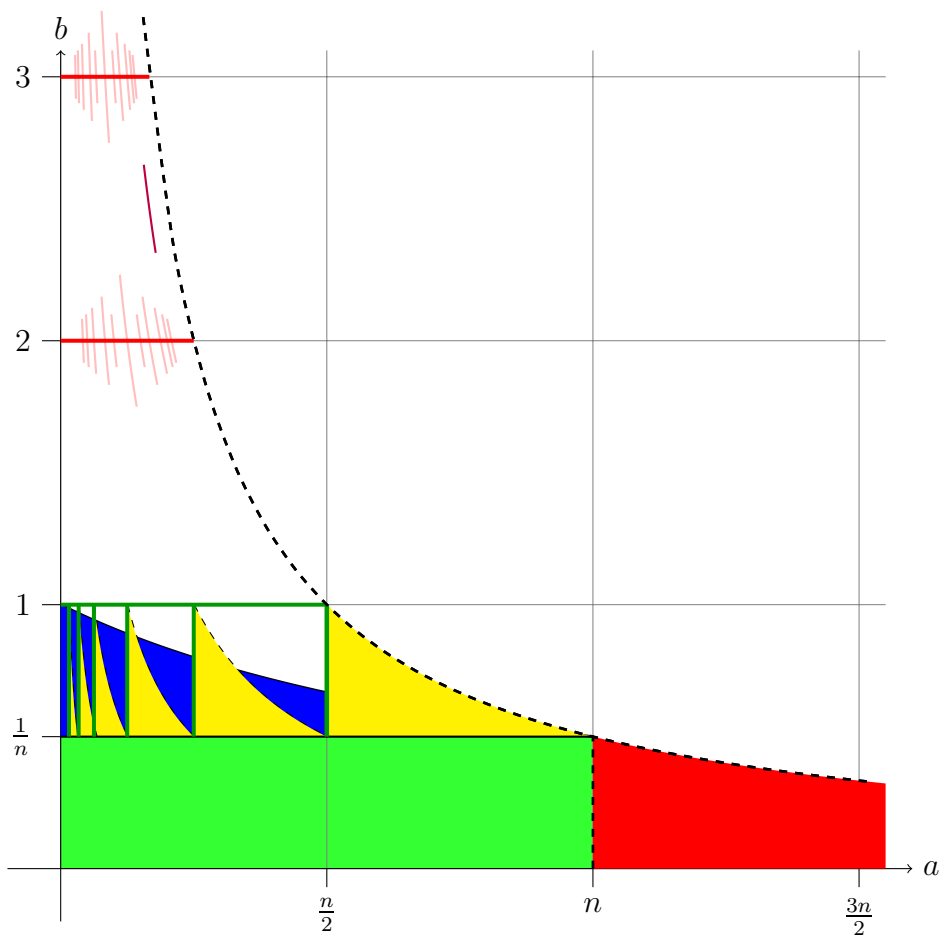
Along the same lines, we verify the frame set conjecture for B-spline
for a new region, marked with blue in Figure~\ref{fig:frame-set-B2}
for the case $B_2$. The idea of the proof is a painless construction
of an alternate dual frame; the details are presented in
Section~\ref{sec:painl-case-altern}.

In Section~\ref{sec:counter-example} we give the proof that the frame
set conjecture is false. Indeed, using a Zibulski-Zeevi representation
and properties of the Zak transform of B-splines, we show in
Theorem~\ref{thm:frame-set-Bn} that any hyperbola of the form
$ab=\frac{p}{q}<1$ with relative prime $p$ and $q$ has infinitely many
pieces not in the frame set $\cF(B_n)$. The pieces are hyperbolic
curves located around $b=2,3,\dots$, with the range of each of these
hyperbolic curve determined by $\abs{b-R(b)} \le \frac{1}{nq}$, where
$\round{x}$ denotes the round function to the nearest integer of $x
\in \R$; a few
of the hyperbolic curves are marked with pink in
Figure~\ref{fig:frame-set-B2} for the case $B_2$. Note that these
pink curves ``color'' a region that leaves a white region (between $b=m$ and
$b=m+1$ for $m=2,3,\dots$) looking like a bow
tie, similar to Janssen's tie for $B_1$ \cite{MR1955931}, but thicker at the knot. We end
Section~\ref{sec:counter-example} by presenting a counterexample for
$B_2$ that is not covered by Theorem~\ref{thm:frame-set-Bn}; in that
subsection we also formulate a new conjecture on the frame set of
$B_2$.

\section{Painless expansions for alternate duals }
\label{sec:painl-case-altern}

For $n>0$, define the parameter set:
\[
  \Sigma_n=\setprop{(a,b)\in \R^2_+}{0< a < n, ab < 1, n + a \leq
    \frac{2}{b}}. 
\]
For $n\in \N, n\ge 2$, we prove that $\mathcal{G}(B_n,a,b)$ is a frame
for $(a,b)\in \Sigma_n$ by constructing a function $h \in L^2(\R)$
such that $\mathcal{G}(B_n,a,b)$ and $\mathcal{G}(h,a,b)$ are dual
frames.  For this, we need the following well-known characterization
result of dual Gabor frames; we refer the reader to \cite{MR1946982}
for a proof.
\begin{thm}\label{thm:dualFrameEqs}
  Let $a,b>0$, and let $g,h\in L^2(\mathbb{R})$. Suppose
  $\mathcal{G}(g,a,b)$ and $\mathcal{G}(h,a,b)$ are Bessel
  sequences. Then they are dual frames if and only if, for all
  $m\in\mathbb{Z}$,
\begin{equation}
\label{eq:dualFrames}
\sum_{k \in \mathbb{Z}} \overline{g(x-m/b-ka)}h(x-ka)=b\,
\delta_{m,0} \quad  \text{for } a.e. \ x\in  \itvcc{-\frac{a}{2}}{\frac{a}{2}}.
\end{equation}
\end{thm}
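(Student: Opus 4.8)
The plan is to recast ``$\mathcal{G}(g,a,b)$ and $\mathcal{G}(h,a,b)$ are dual frames'' as the operator identity $S_{h,g}=I$ for the mixed operator $S_{h,g}f=\sum_{k,m\in\Z}\innerprods{f}{g_{k,m}}h_{k,m}$, where $g_{k,m}=\myexp{2\pi i bm\cdot}g(\cdot-ak)$ and $h_{k,m}=\myexp{2\pi i bm\cdot}h(\cdot-ak)$, and then to unfold this identity into the periodized relation \eqref{eq:dualFrames} by a Fourier-series computation. First I would note that since both systems are Bessel, the analysis and synthesis operators are bounded, so $S_{h,g}$ is bounded on $L^2(\R)$ and the defining series converges unconditionally; moreover $S_{h,g}=I$ is \emph{equivalent} to the dual-frame property, because given the Bessel bounds $B_g,B_h$, the identity $S_{h,g}=I$ already forces both systems to be frames — an upper bound is immediate, and a lower frame bound for $g$ follows from $\norm{f}^4=\abs{\innerprods{S_{h,g}f}{f}}^2\le\bigl(\sum_{k,m}\abs{\innerprods{f}{g_{k,m}}}^2\bigr)\bigl(\sum_{k,m}\abs{\innerprods{f}{h_{k,m}}}^2\bigr)\le B_h\norm{f}^2\sum_{k,m}\abs{\innerprods{f}{g_{k,m}}}^2$, and symmetrically for $h$. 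Since $S_{h,g}$ is bounded, it then suffices to verify $\innerprods{S_{h,g}f_1}{f_2}=\innerprods{f_1}{f_2}$ for $f_1,f_2$ ranging over the dense subspace of bounded, compactly supported functions.

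Next I would fix such $f_1,f_2$ and expand
\[
\innerprods{S_{h,g}f_1}{f_2}=\sum_{k\in\Z}\Bigl(\sum_{m\in\Z}\innerprods{f_1}{g_{k,m}}\,\overline{\innerprods{f_2}{h_{k,m}}}\Bigr),
\]
the double sum being absolutely convergent by Bessel and Cauchy--Schwarz. For fixed $k$ the numbers $\innerprods{f_1}{g_{k,m}}$, $m\in\Z$, are (up to a factor) the Fourier coefficients, in the system $\set{\myexp{2\pi i bm\cdot}}$, of the $1/b$-periodization of $x\mapsto f_1(x)\overline{g(x-ak)}$, and likewise for $f_2,h$; because $f_1,f_2$ are bounded with compact support these periodizations are \emph{finite} sums and lie in $L^2\itvcos{0}{1/b}$, so Parseval's identity on an interval of length $1/b$ gives, after expanding the product of the two periodizations and re-collecting the double sum over the periodization indices into an integral over $\R$,
\[
\sum_{m\in\Z}\innerprods{f_1}{g_{k,m}}\,\overline{\innerprods{f_2}{h_{k,m}}}=\frac1b\sum_{n\in\Z}\int_{\R}f_1(x)\,\overline{g(x-ak)}\,h(x+n/b-ak)\,\overline{f_2(x+n/b)}\,dx .
\]
Summing over $k$ — which is legitimate by Tonelli, since only finitely many $n$ contribute (compact supports) and for each such $n$ the $k$-sum of the absolute values has finite integral by two applications of Cauchy--Schwarz together with $\sum_k\abs{g(\cdot-ak)}^2,\sum_k\abs{h(\cdot-ak)}^2\in\Lloc$ — yields
\[
\innerprods{S_{h,g}f_1}{f_2}=\frac1b\sum_{n\in\Z}\int_{\R}f_1(x)\,\overline{f_2(x+n/b)}\,G_n(x)\,dx,\qquad G_n(x):=\sum_{k\in\Z}\overline{g(x-ak)}\,h(x+n/b-ak),
\]
where each $G_n$ is $a$-periodic and belongs to $\Lloc$; the substitution $x\mapsto x+n/b$ identifies the assertion ``$G_n\equiv b\,\delta_{n,0}$ for every $n\in\Z$'' with exactly \eqref{eq:dualFrames} (the index there being $m=n$).

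The two implications then drop out of this identity. If \eqref{eq:dualFrames} holds, then $G_n\equiv b\,\delta_{n,0}$, the right-hand side collapses to $\int_\R f_1\overline{f_2}=\innerprods{f_1}{f_2}$, and density together with boundedness of $S_{h,g}$ gives $S_{h,g}=I$, i.e.\ dual frames. Conversely, if the systems are dual frames the displayed identity holds for all bounded compactly supported $f_1,f_2$; taking $f_1=\charfct{I}$ and $f_2=\charfct{I+n_0/b}$ with $I$ an interval of length less than $1/b$ kills every term except $n=n_0$ and yields $\frac1b\int_I G_{n_0}=\innerprods{\charfct{I}}{\charfct{I+n_0/b}}=b\,\delta_{n_0,0}\abs{I}$; since this holds for all such $I$ and $G_{n_0}\in\Lloc$, we conclude $G_{n_0}\equiv b\,\delta_{n_0,0}$, which is \eqref{eq:dualFrames}.

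The main obstacle is entirely in the middle step: the content is bookkeeping, but the interchange of the $m$-sum with the integral (via Parseval), of the $k$-sum with the integral (via Tonelli), and the passage back and forth between $\R$ and the torus $\R/(b^{-1}\Z)$ all need justification, and the cleanest route is precisely to run the whole argument on the dense class of bounded compactly supported test functions — so that every periodization is a finite sum and only finitely many ``$n$-diagonals'' survive — rather than trying to make the Bessel bound alone carry the convergence for arbitrary $L^2$ functions.
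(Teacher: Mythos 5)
Your argument is correct, and it coincides with the standard proof: the paper itself does not prove Theorem~\ref{thm:dualFrameEqs} but refers to \cite{MR1946982}, where the result is established exactly as you do it --- by writing the mixed operator $S_{h,g}$ in its Walnut form on the dense class of bounded, compactly supported functions, using Parseval on a period interval of length $1/b$ to produce the $a$-periodic functions $G_n$, and then testing against characteristic functions of short intervals for the converse. The only blemish is a harmless constant slip in your last display: $\innerprods{\charfct{I}}{\charfct{I+n_0/b}}=\delta_{n_0,0}\abs{I}$ (no factor $b$), which combined with $\tfrac1b\int_I G_{n_0}$ gives precisely the stated conclusion $G_{n_0}\equiv b\,\delta_{n_0,0}$, i.e.\ \eqref{eq:dualFrames}.
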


The following result provides a painless expansion of $L^2$-functions
for a class of compactly supported generators using alternate
duals. The announced frame property of B-splines for $(a,b)$ in the
region $\Sigma_n$ will follow as a special case.
\begin{thm}
\label{thm:painless-alternate}
Let $n>0$, $(a,b) \in \Sigma_n$, and $g \in L^\infty(\R)$.  Suppose
that $\supp{g} \subset \itvcc{-\frac{n}{2}}{\frac{n}{2}}$ and
$c:=\inf_{x \in \itvcc{-\frac{a}{2}}{\frac{a}{2}}} \abs{g(x)}>0$.
Define $h \in L^2(\R)$ by
\[
h(x) = \begin{cases}
\frac{b}{g(x)} & x \in \itvcc{-\frac{a}{2}}{\frac{a}{2}}, \\
0 & \text{otherwise}.
\end{cases}
\]
Then $\mathcal{G}(g,a,b)$ and $\mathcal{G}(h,a,b)$ are dual frames. In
particular, $\mathcal{G}(g,a,b)$ is a frame with lower bound $A_g$
satisfying
\[
 A_g \ge \frac{c^2}{b}.
\]
\end{thm}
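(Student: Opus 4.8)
The plan is to verify the dual-frame equation \eqref{eq:dualFrames} of Theorem~\ref{thm:dualFrameEqs} directly for the pair $(g,h)$, after first checking that both systems are Bessel. Since $g \in L^\infty(\R)$ has compact support, it lies in $L^2(\R)$ and in fact generates a Bessel sequence for every $a,b>0$ (boundedness plus compact support is more than enough; one can cite the standard sufficient condition). The function $h$ is bounded by $b/c$ on $\itvcc{-a/2}{a/2}$ and vanishes elsewhere, so it too is bounded with compact support, hence Bessel. With both Bessel properties in hand, Theorem~\ref{thm:dualFrameEqs} reduces everything to checking, for each $m \in \Z$ and a.e.\ $x \in \itvcc{-a/2}{a/2}$, that
\[
\sum_{k \in \Z} \overline{g(x - m/b - ka)}\, h(x-ka) = b\,\delta_{m,0}.
\]

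For the sum on the left, observe that $h(x-ka)$ is nonzero only when $x - ka \in \itvcc{-a/2}{a/2}$; for $x$ in the fundamental domain $\itvcc{-a/2}{a/2}$ this forces $k=0$ (this is where $0<a$ is used, and one handles the measure-zero endpoint overlap by restricting to the a.e.\ statement). So the sum collapses to the single term $\overline{g(x-m/b)}\,h(x)$. When $m=0$ this is $\overline{g(x)}\,h(x) = \overline{g(x)} \cdot b/g(x) = b$ (using $|g(x)| \ge c > 0$ on $\itvcc{-a/2}{a/2}$, so $g(x)\neq 0$ and division is legitimate; note also $g$ may be taken real here but the conjugate causes no trouble either way), matching $b\,\delta_{0,0}$. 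When $m \neq 0$, I must show $\overline{g(x-m/b)}\,h(x) = 0$, i.e.\ that $g(x-m/b)=0$. This is the crux: since $\supp g \subset \itvcc{-n/2}{n/2}$, it suffices that $x - m/b \notin \itvcc{-n/2}{n/2}$, equivalently $|x - m/b| > n/2$. For $x \in \itvcc{-a/2}{a/2}$ and $m \neq 0$ we have $|x - m/b| \ge |m|/b - a/2 \ge 1/b - a/2$, so it is enough that $1/b - a/2 \ge n/2$, i.e.\ $2/b \ge n + a$ — which is exactly the defining inequality of $\Sigma_n$.

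The main obstacle, such as it is, is precisely pinning down this support/translation bookkeeping: making sure the constraint $n + a \le 2/b$ is used with the correct inequalities, and being careful that the condition only needs to hold for the worst case $|m|=1$ (larger $|m|$ are easier since $|m|/b$ grows). One should also note that $ab<1$ guarantees $h \not\equiv 0$ and that the construction is non-degenerate, and that $0<a<n$ is consistent with $n+a \le 2/b$ (it is implied, since $2a < n + a \le 2/b$). Once \eqref{eq:dualFrames} is verified, Theorem~\ref{thm:dualFrameEqs} yields that $\mathcal{G}(g,a,b)$ and $\mathcal{G}(h,a,b)$ are dual frames.

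Finally, for the lower frame bound: from the general duality theory, if $\mathcal{G}(g,a,b)$ and $\mathcal{G}(h,a,b)$ are dual frames then for every $f \in L^2(\R)$,
\[
\norm[2]{f}^2 = \innerprods{f}{f} = \sum_{k,m} \innerprods{f}{E_{bm}T_{ak}g}\,\overline{\innerprods{f}{E_{bm}T_{ak}h}} \le \Bigl(\sum_{k,m} \abs{\innerprods{f}{E_{bm}T_{ak}g}}^2\Bigr)^{1/2} \Bigl(\sum_{k,m}\abs{\innerprods{f}{E_{bm}T_{ak}h}}^2\Bigr)^{1/2}
\]
by Cauchy--Schwarz. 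Bounding the $h$-factor by $\sqrt{B_h}\,\norm[2]{f}$ where $B_h$ is a Bessel bound for $\mathcal{G}(h,a,b)$, dividing through, and squaring gives $\norm[2]{f}^2 \le \tfrac{1}{A_g'} \sum_{k,m}\abs{\innerprods{f}{E_{bm}T_{ak}g}}^2$ with $A_g' = 1/B_h$; hence $A_g \ge 1/B_h$. It then remains to exhibit the explicit Bessel bound $B_h \le b/c^2$ for $\mathcal{G}(h,a,b)$. This follows because $h$ is supported in $\itvcc{-a/2}{a/2}$ — an interval of length $a$, so the translates $\{T_{ak}h\}_{k\in\Z}$ have essentially disjoint supports — and $\abs{h(x)} \le b/c$ there; a direct computation (or the standard CC-disjoint-support estimate together with the $1/b$ factor from the modulation/Parseval sum over $m$) gives $\sum_{k,m}\abs{\innerprods{f}{E_{bm}T_{ak}h}}^2 \le \tfrac{1}{b}\esssup\abs{h}^2 \cdot a \cdot \tfrac{1}{a}\,\norm[2]{f}^2$; organizing the constants yields $B_h \le b/c^2$, and therefore $A_g \ge c^2/b$ as claimed.
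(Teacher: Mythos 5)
Your proposal is correct and follows essentially the same route as the paper: Bessel via boundedness plus compact support, collapse of the duality sums of Theorem~\ref{thm:dualFrameEqs} to the $k=0$ term because $\supp h = \itvcc{-\frac{a}{2}}{\frac{a}{2}}$, the $m=0$ equation from the definition of $h$, and the $m\neq 0$ equations from the support separation forced by $n+a\le \frac{2}{b}$ (worst case $\abs{m}=1$), with the boundary/endpoint overlaps handled a.e. The only difference is cosmetic and concerns the frame-bound step: you obtain $A_g \ge 1/B_h \ge c^2/b$ by Cauchy--Schwarz together with an explicit Bessel bound $B_h \le b/c^2$ for $\mathcal{G}(h,a,b)$, while the paper states the equivalent fact that the canonical dual of $\mathcal{G}(h,a,b)$ has optimal lower bound $c^2/b$ and that alternate duals can only do better; note also that your $m=0$ computation, like the paper's, tacitly uses that $g$ is real-valued (for genuinely complex $g$ one should set $h=b/\overline{g}$ to match the conjugate in \eqref{eq:dualFrames}).
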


\begin{proof}
  It follows by, e.g., \cite[Proposition 6.2.2]{MR1843717}, that
  $\mathcal{G}(g,a,b)$ and $\mathcal{G}(h,a,b)$ are Bessel
  sequences. To show that these systems are dual frames, we will show
  that the equations~(\ref{eq:dualFrames}) hold for a.e.\ $x\in
  \itvcc{-\frac{a}{2}}{\frac{a}{2}}$. For a.e.\ $x\in
  \itvcc{-\frac{a}{2}}{\frac{a}{2}}$, since $\supp{h} =
  \itvcc{-\frac{a}{2}}{\frac{a}{2}}$, we only have one nonzero term in
  the sum in (\ref{eq:dualFrames}). Thus the equations we have to
  verify are
\[
g\bigl(x-\frac{m}{b}\bigr)\, h(x)=b \, \delta_{m,0}, \quad a.e. \ x \in
\itvcc{-\frac{a}{2}}{\frac{a}{2}} \quad \text{for $m \in \mathbb{Z}$}.
\]
 For $m=0$, we have by definition of $h$ that 
\[
g(x)h(x)=g(x) \frac{b}{g(x)}=b, \quad \text{for } a.e. \ x
\in \itvcc{-\frac{a}{2}}{\frac{a}{2}}.
\]
For $m \in \Z \setminus \{0\}$, it follows from our assumption
$\frac{m}{2} + \frac{a}{2} \leq \frac{1}{b}$ that the support sets
\[ 
\supp{g\left(\cdot -\frac{m}{b}\right)}
=\itvcc{-\frac{m}{2}+\frac{m}{b}}{\frac{m}{2}+\frac{m}{b}} \text{ and } \supp{h}
= \itvcc{-\frac{a}{2}}{\frac{a}{2}} 
\]
are disjoint. Hence, for $m \neq 0$, the
equations~\eqref{eq:dualFrames} are trivially satisfied.

 The canonical dual of $\mathcal{G}(h,a,b)$ has optimal lower frame bound
$c^2/b$. Since $\mathcal{G}(g,a,b)$ is an alternate dual of
$\mathcal{G}(h,a,b)$, the bound $A_g \ge c^2/b$ follows.
\end{proof}

\begin{rem}
  In the classical ``painless non-orthogonal expansion'' for compactly
  supported generators~\cite{MR836025}, the Walnut representation of
  the frame operator $S_{g,g}$ only has one nonzero term due to the
  assumptions on the parameter values $(a,b)$ and on the support of
  $g$. Indeed, in this case, $S_{g,g}$ becomes a multiplication
  operator. In Theorem~\ref{thm:dualFrameEqs} it is the Walnut
  representation of the \emph{mixed} frame operator $S_{g,h}$ that,
  due the support of $g$ and $h$, is guaranteed to have only one
  nonzero term.
\end{rem}

\begin{cor}\label{thm:BsplinesNEW}
Let $n \in \N, n \geq 2$, and let $(a,b) \in \Sigma_n$.  
Then $\mathcal{G}(B_n,a,b)$ is a frame for $L^2(\R)$. 
\end{cor}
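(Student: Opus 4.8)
The plan is to obtain this as the special case $g = B_n$ of Theorem~\ref{thm:painless-alternate}, so the task reduces to checking its three hypotheses. The membership $B_n \in L^\infty(\R)$ and the support bound $\supp B_n \subset \itvcc{-\frac{n}{2}}{\frac{n}{2}}$ are immediate from the inductive definition $B_{n+1} = B_n \ast B_1$: convolution adds supports, so by induction $\supp B_n \subset \itvcc{-\frac{n}{2}}{\frac{n}{2}}$, and for $n\ge 2$ the function $B_n$ is continuous, hence bounded.

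The substantive point is the positivity assumption $c := \inf_{x\in\itvcc{-\frac{a}{2}}{\frac{a}{2}}}\abs{B_n(x)} > 0$. I would establish, by induction on $n$, that $B_n(x) > 0$ for every $x$ in the open interval $\itvoo{-\frac{n}{2}}{\frac{n}{2}}$. The base case $B_1 = \chi_{\itvcc{-1/2}{1/2}}$ is clear. For the inductive step, write $B_{n+1}(x) = \int_{-1/2}^{1/2} B_n(x-t)\,dt$; since $B_n \ge 0$ everywhere and, by the induction hypothesis, $B_n > 0$ on $\itvoo{-\frac{n}{2}}{\frac{n}{2}}$, the integral is strictly positive whenever the set $\setprop{x-t}{t\in\itvoo{-1/2}{1/2}}$ meets $\itvoo{-\frac{n}{2}}{\frac{n}{2}}$, which happens precisely for $x\in\itvoo{-\frac{n+1}{2}}{\frac{n+1}{2}}$. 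Now $(a,b)\in\Sigma_n$ forces $0 < a < n$, so $\itvcc{-\frac{a}{2}}{\frac{a}{2}} \subset \itvoo{-\frac{n}{2}}{\frac{n}{2}}$; as $B_n$ is continuous and strictly positive on this compact interval, the infimum is attained and $c > 0$.

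Having verified the hypotheses, Theorem~\ref{thm:painless-alternate} applies directly: with $h$ given by $h = b/B_n$ on $\itvcc{-\frac{a}{2}}{\frac{a}{2}}$ and $h = 0$ elsewhere, the systems $\mathcal{G}(B_n,a,b)$ and $\mathcal{G}(h,a,b)$ are dual frames, and in particular $\mathcal{G}(B_n,a,b)$ is a frame for $L^2(\R)$ with lower bound at least $c^2/b$. I do not anticipate a genuine obstacle here: the only place needing real care is the classical fact that $B_n$ stays strictly positive on the interior of its support, which the induction above handles, and everything else is a direct appeal to the preceding theorem together with the defining inequalities of $\Sigma_n$.
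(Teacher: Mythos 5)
Your proposal is correct and follows exactly the route the paper intends: the corollary is obtained as the special case $g=B_n$ of Theorem~\ref{thm:painless-alternate}, and your verification of the hypotheses (support in $\itvcc{-\frac{n}{2}}{\frac{n}{2}}$, boundedness, and strict positivity of $B_n$ on the open interior so that $a<n$ gives $c>0$ by continuity on the compact interval $\itvcc{-\frac{a}{2}}{\frac{a}{2}}$) supplies precisely the details the paper leaves implicit.
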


\section{The counterexamples}
\label{sec:counter-example}

\subsection{Preliminaries}
We first need to set up notation and recall the Zibulski-Zeevi
representation. The Zak transform of a function $f \in L^2(\R)$ is defined as
\begin{equation}\label{eq:zakTransform}
\left(Z_{\lambda}f\right)(x,\nu)
= \sqrt{\lambda}\sum_{k\in\mathbb{Z}} f(\lambda(x-
  k))\myexp{2\pi i k \nu}, \quad a.e.\ x, \nu \in \mathbb{R},
\end{equation}
with convergence in  $L^2_\mathrm{loc}(\R)$. The Zak transform
$Z_\lambda$ is a unitary map of $L^2(\R)$ onto $L^2(\itvco{0}{1}^2)$, and
 it has the following quasi-periodicity:
\[
Z_\lambda f(x+1,\nu)= \myexp{2\pi i\nu} Z_\lambda f(x,
\nu),  \quad  Z_\lambda f(x, \nu +1) = Z_\lambda f(x, \nu) \quad \text{for 
a.e. } x,\nu \in \R.
\]  

We consider rationally oversampled Gabor systems, i.e., $\mathcal{G}(g,a,b)$ with
\[
ab \in \mathbb{Q}, \quad ab=\frac{p}{q} \quad \gcd(p,q)=1.
\]
For $g\in L^2(\R)$, we define column vectors $\phi^g_\ell(x,\nu) \in \C^p$ for $\ell
\in \set{0,1, \dots, q-1}$ by
\[ 
\phi^g_\ell(x,\nu) = \left(p^{-\frac{1}{2}} (Z_{\frac{1}{b}}g)(x-\ell
  \frac{p}{q},\nu+\frac{k}{p})\right)_{k=0}^{p-1} \ a.e. \ x,\nu \in \mathbb{R},
\] 
and column vectors $\psi^g_\ell(x,\nu) \in \C^p$ for $\ell
\in \set{0,1, \dots, q-1}$  by
\[ 
\psi^g_\ell(x,\nu) = \left(b^{-\tfrac12}\sum_{n\in\mathbb{Z}} g(x+aqn+a\ell+
   k/b) \myexp{-2\pi i aqn \nu}  \right)_{k=0}^{p-1} \ a.e. \ x,\nu \in \mathbb{R}.
\] 
The $p\times q$ matrix defined by $\Phi^g(x,\nu)=[\phi^g_\ell(x,\nu)]_{\ell=0}^{q-1}$ is
the so-called Zibulski-Zeevi matrix, while the $p\times q$ matrix
$\Psi^g(x,\nu)=[\psi^g_\ell(x,\nu)]_{\ell=0}^{q-1}$ is closely related to the
Zibulski-Zeevi matrix and appears in
\cite{MR3027914,JakobsenCocompact2014}. Theorem~\ref{thm:ZZ_singular_values} below says
that the Gabor system $\mathcal{G}(g,a,b)$ is a frame with (optimal) bounds $A$ and $B$
for $L^2(\R)$ if and only if $\sqrt{A}$ is the infimum over a.e.\ $(x,\nu)\in \R^2$ of the
smallest singular value of $\Phi^g(x,\nu)$ (or $\Psi^g(x,\nu)$) and $\sqrt{B}$ is the
supremum over a.e.\ $(x,\nu)\in \R^2$ of the largest singular value of $\Phi^g(x,\nu)$ (or
$\Psi^g(x,\nu)$).
\begin{thm}
\label{thm:ZZ_singular_values}
  Let $A,B\ge 0$, and let $g \in L^2(\R)$. Suppose $\mathcal{G}(g,a,b)$
  is a rationally oversampled Gabor system. Then the following
  assertions are equivalent:
\begin{enumerate}[(i)]
\item   $\mathcal{G}(g,a,b)$ is a Gabor frame for $L^2(\R)$ with bounds $A$ and $B$,
\item $\set{\phi^g_\ell(x,\nu)}_{\ell=0}^q$ is a frame for $\C^p$ with
  uniform bounds $A$ and $B$ for a.e. $(x,\nu) \in
  \itvcos{0}{1}^2$. 
\item $\set{\psi^g_\ell(x,\nu)}_{\ell=0}^q$ is a frame for $\C^p$ with
  uniform bounds $A$ and $B$ for a.e. $(x,\nu) \in
  \itvcos{0}{a} \times \itvcos{0}{\frac{1}{aq}}$. 
\end{enumerate}
\end{thm}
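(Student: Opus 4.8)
The plan is to run the classical Zak-transform fiberization of the analysis operator of $\mathcal G(g,a,b)$; assertion (ii) then falls out directly, and (iii) after a short change of variables relating $\Psi^g$ to $\Phi^g$. Put $\lambda=1/b$ and, for $f\in L^2(\R)$, write $F=Z_\lambda f$ and $\gamma=Z_\lambda g$, viewed as elements of $L^2(\itvcos{0}{1}^2)$ extended quasi-periodically to $\R^2$. From the definition \eqref{eq:zakTransform}, using $ab=p/q$ and the $1$-periodicity of $Z_\lambda g$ in the frequency variable, one obtains the covariance identity
\[
Z_{\lambda}\bigl(\myexp{2\pi i b m\cdot}\,g(\cdot-ak)\bigr)(x,\nu)=\myexp{2\pi i m x}\,\gamma\!\bigl(x-\tfrac pq k,\,\nu\bigr),\qquad k,m\in\Z,
\]
and hence, by unitarity of $Z_\lambda$,
\[
\bigl\langle f,\ \myexp{2\pi i b m\cdot}g(\cdot-ak)\bigr\rangle=\int_{0}^{1}\!\!\int_{0}^{1}F(x,\nu)\,\myexp{-2\pi i m x}\,\overline{\gamma\!\bigl(x-\tfrac pq k,\,\nu\bigr)}\;dx\,d\nu .
\]

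Next I would evaluate the frame sum $\sum_{k,m\in\Z}\bigl|\langle f,\myexp{2\pi i bm\cdot}g(\cdot-ak)\rangle\bigr|^2$ in two stages. For fixed $k$, the function $x\mapsto\int_0^1 F(x,\nu)\overline{\gamma(x-\tfrac pq k,\nu)}\,d\nu$ is $1$-periodic, because the two quasi-periodicity phases cancel, and the Gabor coefficients above are precisely its Fourier coefficients; Parseval in $m$ therefore gives $\sum_{m\in\Z}|\langle f,\myexp{2\pi i bm\cdot}g(\cdot-ak)\rangle|^2=\int_0^1|\langle F(x,\cdot),\gamma(x-\tfrac pq k,\cdot)\rangle_{L^2(0,1)}|^2\,dx$. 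To carry out the sum over $k$, I split $k$ into residue classes modulo $q$: since $\gcd(p,q)=1$, within each class the shift $\tfrac pq k$ equals a fixed $\tfrac pq k_0$ plus integer multiples of $p$, and pushing those integers through $\gamma$ contributes unimodular phases $\myexp{2\pi i s\nu}$. Folding the frequency variable by the substitutions $\nu\mapsto\nu+\tfrac jp$, $j=0,\dots,p-1$, then packages the Zak data of $f$ and $g$ into the vectors $\phi^g_\ell(x,\nu)\in\C^p$ of the statement (the integer $x$-translations needed to turn $\tfrac pq k_0$ into the stated $\ell\tfrac pq$ contribute only a diagonal unitary) and yields a unitary $f\mapsto\mathbf F$ from $L^2(\R)$ onto $L^2\!\bigl(\itvcos{0}{1}\times\itvcos{0}{\tfrac1p};\C^p\bigr)$ with $\norm{\mathbf F}^2=\norm{f}^2$, giving the fiberization identity
\[
\sum_{k,m\in\Z}\bigl|\langle f,\myexp{2\pi i bm\cdot}g(\cdot-ak)\rangle\bigr|^2=\int_{0}^{1}\!\!\int_{0}^{1/p}\ \sum_{\ell=0}^{q-1}\bigl|\bigl\langle\mathbf F(x,\nu),\phi^g_\ell(x,\nu)\bigr\rangle_{\C^p}\bigr|^2\,d\nu\,dx .
\]
The frequency domain $\itvcos{0}{\tfrac1p}$ may be replaced by $\itvcos{0}{1}$ (equivalently $\itvcos{0}{1}^2$), since a shift of $\nu$ by $\tfrac1p$ cyclically permutes the $p$ entries of every $\phi^g_\ell$ at once, a unitary operation that leaves the frame property and its bounds unchanged.

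Granting the fiberization, (i)$\Leftrightarrow$(ii) is the usual localization argument: for one direction integrate the fiberwise inequalities against $|\mathbf F(x,\nu)|^2$ over the fundamental domain; for the other, given $v\in\C^p$, test with $\mathbf F=v\,\charfct{E}$ and let the measurable set $E$ shrink to a Lebesgue point to force $A\norm{v}^2\le\sum_{\ell}|\langle v,\phi^g_\ell(x,\nu)\rangle|^2\le B\norm{v}^2$ for a.e.\ $(x,\nu)$, which is (ii). The singular-value formulation stated before the theorem is then immediate, because the frame operator of the $q$ vectors $\{\phi^g_\ell(x,\nu)\}_\ell$ is $\Phi^g(x,\nu)\Phi^g(x,\nu)^{\ast}$, whose eigenvalues are the squared singular values of $\Phi^g(x,\nu)$, so the optimal bounds are the essential extrema over $(x,\nu)$ of the smallest, resp.\ largest, of these. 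For (iii) I would not redo the fiberization but relate $\Psi^g$ to $\Phi^g$ directly: a short Poisson-summation computation, combined with the quasi-periodicity of $Z_{1/b}$, shows that the scale-$aq$ Zak sum implicit in $\psi^g_\ell$ is a normalized discrete-Fourier superposition of the values of $Z_{1/b}g$ appearing in $\phi^g_\ell$, which produces on $\itvcos{0}{a}\times\itvcos{0}{\tfrac1{aq}}$ an identity of the form $\Psi^g(x,\nu)=U(x,\nu)\,\Phi^g(\sigma(x,\nu))\,V(x,\nu)$, where $U,V$ are $(x,\nu)$-measurable families of unitary $p\times p$ and $q\times q$ matrices (a diagonal unitary times a $p\times p$ discrete Fourier transform, respectively a permutation times a diagonal unitary) and $\sigma$ is a measure-preserving linear map whose image, modulo the pointwise-unitary $x$- and $\nu$-shift symmetries of $\Phi^g$, hits a.e.\ fiber of $\Phi^g$. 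Hence $\Phi^g$ and $\Psi^g$ have the same singular values a.e., and (ii)$\Leftrightarrow$(iii). (Alternatively, (iii) follows by applying (i)$\Leftrightarrow$(ii) to the Fourier-conjugate system $\mathcal G(\widehat g,b,a)$, which the Fourier transform identifies, up to unimodular phases, with $\mathcal G(g,a,b)$, so it is a frame with the same bounds.)

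I expect the genuine work to be concentrated in the second step: decomposing $k\in\Z$ into a residue modulo $q$ and a quotient while tracking every quasi-periodicity phase, and verifying that the resulting assignment $f\mapsto\mathbf F$ is \emph{exactly} unitary onto the stated vector-valued $L^2$-space, not merely bounded or unitary up to a scalar. Everything else — the covariance identity, the two applications of Parseval, the localization step, and the discrete-Fourier comparison of $\Phi^g$ with $\Psi^g$ — is routine once that fiberization is correctly set up, as is the measurability of the fibered objects.
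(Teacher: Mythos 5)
Your outline is correct, and the constants work out: the covariance identity $Z_{1/b}(\myexp{2\pi i bm\cdot}g(\cdot-ak))(x,\nu)=\myexp{2\pi imx}Z_{1/b}g(x-\tfrac pq k,\nu)$, Parseval in $m$, the splitting of $k$ into residues modulo $q$ (where $\gcd(p,q)=1$ turns the quotient variable into phases $\myexp{\pm 2\pi i ps\nu}$), and the folding $\nu\mapsto\nu+\tfrac jp$ reproduce exactly the fiberization identity with the normalization $p^{-1/2}$ built into $\phi^g_\ell$, and your localization argument and the observation that a $\tfrac1p$-shift in $\nu$ only cyclically permutes coordinates give (i)$\Leftrightarrow$(ii) with the stated domain $\itvcos{0}{1}^2$. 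Your main route to (iii) is also sound: a Poisson-type regrouping of $Z_{1/b}g$ over residues modulo $p$ does yield $\Psi^g(x,\nu)=U(x,\nu)\,\Phi^g(\sigma(x,\nu))\,V(x,\nu)$ with $U$ a diagonal unitary times a $p\times p$ DFT, $V$ a column permutation times phases, and $\sigma$ measure preserving, so the singular values agree a.e. Be aware, though, that the paper itself gives no proof of this theorem: it is quoted as the known Zibulski--Zeevi representation, with the $\Psi^g$-form attributed to \cite{MR3027914,JakobsenCocompact2014}, so what you have written is essentially the standard proof from that literature rather than an alternative to anything in the paper. Two minor points: no extra integer $x$-translation is needed when reducing $k$ modulo $q$, since the residues $k_0=0,\dots,q-1$ already produce the shifts $\ell\tfrac pq$ in $\phi^g_\ell$; and your parenthetical shortcut for (iii) via $\mathcal G(\widehat g,b,a)$ is not immediate as stated, because applying (ii) to that system produces fibers of $Z_{1/a}\widehat g$, which must still be identified with the vectors $\psi^g_\ell$ --- an identification that amounts to the same computation as your primary route.
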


\subsection{A partition of unity property and zeros of the Zak transform}
\label{sec:partition-unity-zero}

We let $N_n$ denote the $n$th order cardinal B-spline, i.e.,
$N_n(x)=B_n(x-\frac{n}{2})$ for each $n\in \N$.  Since the frame set
$\cF(g)$ is invariant under translation of the generator $g$, it
follows that $\cF(N_n)=\cF(B_n)$. The results in this section are
formulated for $N_n$; the results can, of course, also be formulated
for $B_n$. 

Integer translations of the B-splines form a partition of unity. The
following result shows that the partition of unity property for
B-splines is somewhat stable under perturbations of the translation
lattice; indeed, the translations of B-splines along $c^{-1}\Z$ yield
a \emph{partly} partition of unity (up to a constant) whenever $c$ is
sufficiently close to an integer. For $x\in \R$ let $\round{x}$ denote
the round function to the nearest integer, i.e.,
$R(x)=\floor{x+\frac12}$, and let $\sfrac{x}=x-\round{x} \in
\itvoc{-\frac12}{\frac12}$ denote the (signed) fractional part of $x$.
\begin{lem}
\label{lem:partly-pou}
  Let $n \in \N$ and $c>0$. Assume that $\abs{\sfrac{c}} \le
  \frac{1}{n}$.
  \begin{enumerate}[(i)]
  \item If $\sfrac{c}\ge 0$, then 
    \begin{equation}
      \label{eq:partly-part-of-unity-1}
      \sum_{k \in \Z} N_n((x+k)/c) = \mathrm{const} \quad \text{for }
      x \in \bigcup_{m\in \Z}\itvcc{m+n\sfrac{c}}{m+1} 
    \end{equation}
  \item If $\sfrac{c}\le 0$, then 
    \begin{equation}
      \label{eq:partly-part-of-unity-2}
      \sum_{k \in \Z} N_n((x+k)/c) = \mathrm{const} \quad \text{for }
      x \in \bigcup_{m\in \Z}\itvcc{m}{m+1+n\sfrac{c}} 
    \end{equation}
  \end{enumerate}
\end{lem}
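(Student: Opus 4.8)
The plan is to reduce the whole statement to a direct Fourier computation of the $1$-periodic function $g_c(x):=\sum_{k\in\Z}N_n((x+k)/c)$. Since $g_c(x+1)=g_c(x)$, it is enough to prove constancy on the interval around $m=0$ — namely $\itvcc{n\sfrac{c}}{1}$ in case (i) and $\itvcc{0}{1+n\sfrac{c}}$ in case (ii) — and then periodicity gives the statement for every $m$ with one and the same constant.

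First I would observe that $g_c$ is the $1$-periodization of $f:=N_n(\cdot/c)\in L^1(\R)$, so its Fourier coefficients are $\widehat{g_c}(m)=\widehat{f}(m)=c\,\widehat{N_n}(cm)$, where $\widehat{N_n}(\xi)=\bigl(\tfrac{1-\myexp{-2\pi i\xi}}{2\pi i\xi}\bigr)^{n}$. Writing $c=\round{c}+\sfrac{c}$ and using $\myexp{-2\pi i\round{c}\,m}=1$, the exponential only feels the fractional part, so
\[
 \widehat{g_c}(m)=c^{1-n}\Bigl(\tfrac{1-\myexp{-2\pi i\sfrac{c}\,m}}{2\pi i m}\Bigr)^{\!n}\quad(m\neq0),\qquad \widehat{g_c}(0)=c .
\]
The key point is that $\tfrac{1-\myexp{-2\pi i\sfrac{c}\,m}}{2\pi i m}=\int_{0}^{\sfrac{c}}\myexp{-2\pi i m t}\,dt$ is, up to the harmless factor $\sign{\sfrac{c}}$, the $m$-th Fourier coefficient of $\chi_{I}$, where $I$ is the interval with endpoints $0$ and $\sfrac{c}$; hence its $n$-th power is the $m$-th Fourier coefficient of the $n$-fold convolution $\chi_{I}^{\ast n}$, a nonnegative function supported on the interval $J$ with endpoints $0$ and $n\sfrac{c}$ and of total mass $\abs{\sfrac{c}}^{n}$. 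Summing the Fourier series and letting $\Pi$ be the $1$-periodization of $\chi_{I}^{\ast n}$, this yields
\[
 g_c(x)=c-c^{1-n}\bigl(\sfrac{c}\bigr)^{n}+c^{1-n}\bigl(\sign{\sfrac{c}}\bigr)^{n}\,\Pi(x).
\]

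The proof then finishes by locating the zeros of $\Pi$. Since $\chi_{I}^{\ast n}$ is supported on $J$ and the hypothesis $\abs{\sfrac{c}}\le\tfrac1n$ makes $J$ fit inside a single period, the periodization $\Pi$ vanishes identically on $\itvoo{n\sfrac{c}}{1}$ when $\sfrac{c}\ge0$ and on $\itvoo{0}{1+n\sfrac{c}}$ when $\sfrac{c}\le0$. On that open interval $g_c$ therefore equals the constant $c-c^{1-n}\bigl(\sfrac{c}\bigr)^{n}$; as $g_c$ is continuous for $n\ge2$, the identity persists on the closed interval, and $1$-periodicity propagates the same constant to $\itvcc{m+n\sfrac{c}}{m+1}$ (resp.\ $\itvcc{m}{m+1+n\sfrac{c}}$) for all $m\in\Z$.

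The delicate but routine points are the convergence of the Fourier series — absolute for $n\ge2$ because $\widehat{N_n}(cm)=\BigO(\abs{m}^{-n})$, so the series really does sum pointwise to the continuous function $g_c$, while for $n=1$ the claim is transparent since $g_c$ is then piecewise constant with breakpoints only in $\{j\,\sfrac{c}\}_{j=0}^{n}+\Z$ — together with the sign and endpoint bookkeeping needed to place the vanishing of $\Pi$ on exactly the stated intervals. I expect this bookkeeping to be the main thing to get right; conceptually nothing is hard. A Fourier-free alternative, if one prefers, is induction on $n$: from $N_n'=N_{n-1}-N_{n-1}(\cdot-1)$ one gets $g_c'=\tfrac1c\bigl(G_c-G_c(\cdot-\sfrac{c})\bigr)$ with $G_c$ the order-$(n-1)$ sum, and since $\abs{\sfrac{c}}\le\tfrac1n\le\tfrac1{n-1}$ the inductive hypothesis applies and forces both arguments into the interval where $G_c$ is constant, so $g_c'\equiv0$ on the gap; the base case $n=1$ is immediate from the breakpoint structure.
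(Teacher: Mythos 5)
Your argument is correct, and it takes a genuinely different route from the paper. You diagonalize the problem: writing $g_c(x)=\sum_{k\in\Z}N_n((x+k)/c)$ as the $1$-periodization of $N_n(\cdot/c)$, you compute its Fourier coefficients $c\,\widehat{N_n}(cm)$, observe that only $\sfrac{c}$ survives in the exponential, and recognize the $m\neq 0$ coefficients (up to $c^{1-n}\bigl(\sign{\sfrac{c}}\bigr)^n$) as those of the periodization of the $n$-fold self-convolution of $\charfct{I}$, $I$ the interval between $0$ and $\sfrac{c}$; the hypothesis $\abs{\sfrac{c}}\le\frac1n$ then makes that convolution fit inside one period, so $g_c$ is constant exactly off (the periodized copies of) the interval between $0$ and $n\sfrac{c}$, with the bonus of an explicit constant $c-c^{1-n}\bigl(\sfrac{c}\bigr)^n$ and an exact description of the exceptional set. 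The paper instead argues by induction on $n$: the case $n=1$ is immediate, and convolving the inductive identity with $N_1(\cdot/c)$ shows the sum for $N_{n+1}$ is constant on $\bigcup_m\itvcc{m+(n+1)\sfrac{c}}{m+1}$ because both integration endpoints $x$ and $x-c$ stay in the region of constancy; this is shorter and avoids Fourier analysis, but gives no formula for the constant. Your convergence bookkeeping is sound for $n\ge 2$ (absolutely convergent series, continuity up to the closed interval), and your treatment of $n=1$ (piecewise constant with breakpoints in $\{0,\sfrac{c}\}+\Z$, endpoint values being an a.e./convention matter for $\charfct{\itvcc{0}{1}}$) is at the same level of rigor as the paper's ``immediate'' base case. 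Your sketched Fourier-free alternative via $N_n'=N_{n-1}-N_{n-1}(\cdot-1)$ is essentially the paper's convolution induction in differentiated form, so it is the Fourier computation that constitutes the genuinely new content.
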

\begin{proof}
  (i): Assume that $\sfrac{c}\ge 0$. The proof
  goes by induction. For $n=1$ it is immediate that
  \eqref{eq:partly-part-of-unity-1} holds. Suppose that
  \eqref{eq:partly-part-of-unity-1} holds for $n \in \N$. Hence, there
  exist a 1-periodic function $g \in L^\infty(\R)$ and a constant
  $d \in \R$ such that
    \begin{equation*}
      \sum_{k \in \Z} N_n((x+k)/c) = d \chi_A(x) + g(x)\chi_{A^c}(x),
      \quad \text{for } x\in \R, 
    \end{equation*}
where $A:=\bigcup_{m\in \Z}\itvcc{m+n\sfrac{c}}{m+1}$. 
We convolve the left and right hand side of this equation with
$N_1(\tfrac{\cdot}{c})$. The left hand side then becomes $\sum_{k \in
  \Z} N_{n+1}((x+k)/c)$. The right hand side becomes
\[
  (N_1(\tfrac{\cdot}{c}) \ast (d\, \chi_A + g \cdot \chi_{A^c}))(x) =
  \int_{-c+x}^x \left(d \,
  \chi_A(t) + g(t)\, \chi_{A^c}(t) \right) \, dt,
\]
For $x \in B:=\bigcup_{m\in \Z}\itvcc{m+(n+1)\sfrac{c}}{m+1}$, 
we have $x \in A$ and $-c+x \in A$. Thus $N_1(\tfrac{\cdot}{c}) \ast
(c \chi_A + g \cdot \chi_{A^c})$ is constant on $B$. This completes
the proof of the inductive step, and (i) is proved. The proof of (ii) is similar.
\end{proof}

\begin{rem}
\label{rem:zak-Bn-continuous}
  The Zak transform of B-splines $Z_\lambda N_n$ is defined pointwise
  and is bounded on $\R^2$. For $n \ge 2$, the Zak transform
  $Z_\lambda N_n$ is continuous, while $Z_\lambda N_1$ is piecewise
  continuous on $\itvco{0}{1}^2$ with 
  discontinuities along at most finitely many lines of the form $t=\mathrm{const}$.
\end{rem}
The next result shows that, for $b>\frac32$, the Zak transform
$Z_{1/b}N_n(x,\nu)$ of the B-spline $N_n$  for $(x,\nu)\in
\itvoc{0}{1}^2$ is zero along $\round{b}-1$
parallel lines of length $1-n\abs{\sfrac{b}}$.
\begin{prop}
\label{thm:zeros-zak}
    Let $n \in \N$ and $b>\frac32$. Assume that $\abs{\sfrac{b}} \le
    \frac1n$.
 \begin{enumerate}[(i)]
  \item If $\sfrac{b}\ge 0$, then 
    \begin{equation}
Z_{1/b}N_n(x,\tfrac{k}{\round{b}}) = 0 \quad \text{for } x \in
\bigcup_{m\in \Z}\itvcc{m+n\sfrac{b}}{m+1}, k \in \Z \setminus \round{b}\Z.
\label{eq:zeros-zak-1}
    \end{equation}
  \item If $\sfrac{b}\le 0$, then 
    \begin{equation}
Z_{1/b}N_n(x,\tfrac{k}{\round{b}}) = 0 \quad \text{for }
      x \in \bigcup_{m\in \Z}\itvcc{m}{m+1+n\sfrac{b}}, k \in \Z
      \setminus \round{b}\Z. 
\label{eq:zeros-zak-2}
    \end{equation}
  \end{enumerate}
\end{prop}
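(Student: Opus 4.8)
The plan is to expand $Z_{1/b}N_n$ through its defining series \eqref{eq:zakTransform}, partition the summation index into residue classes modulo $\round{b}$, and recognize each residue-class subsum as precisely the kind of ``partly partition of unity'' covered by Lemma~\ref{lem:partly-pou}. Throughout write $r=\round{b}$ and $c=b/r$, and denote by $k_0$ the integer appearing in the statement (reserving $k$ for the series index). Since $b>\tfrac32$ we have $r\ge2$, so $\Z\setminus r\Z\neq\emptyset$ and there is something to prove; moreover $c=1+\sfrac{b}/r$ with $\abs{\sfrac{b}/r}\le\tfrac{1}{nr}<\tfrac12$, whence $\round{c}=1$, $\sfrac{c}=\sfrac{b}/r$, $\sign{\sfrac{c}}=\sign{\sfrac{b}}$, $\abs{\sfrac{c}}=\abs{\sfrac{b}}/r\le\tfrac1n$, and — crucially — $n\abs{\sfrac{c}}=n\abs{\sfrac{b}}/r\le\tfrac1r$. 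Thus Lemma~\ref{lem:partly-pou} is applicable with parameter $c$.

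I would then write each $k\in\Z$ uniquely as $k=rj+s$ with $j\in\Z$, $s\in\{0,1,\dots,r-1\}$. Because $\myexp{2\pi i k k_0/r}=\myexp{2\pi i s k_0/r}$ when $k\equiv s\pmod r$, and because $\tfrac{x-rj-s}{b}=\tfrac{(x-s)/r-j}{c}$, the series \eqref{eq:zakTransform} collapses to
\[
  Z_{1/b}N_n\bigl(x,\tfrac{k_0}{r}\bigr)=\frac{1}{\sqrt b}\sum_{s=0}^{r-1}\myexp{2\pi i s k_0/r}\,G_s(x),\qquad G_s(x):=\sum_{j\in\Z}N_n\Bigl(\tfrac{(x-s)/r-j}{c}\Bigr);
\]
the rearrangement is legitimate pointwise since the B-spline series is locally a finite sum (cf.\ Remark~\ref{rem:zak-Bn-continuous}). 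By Lemma~\ref{lem:partly-pou} the map $y\mapsto\sum_{j\in\Z}N_n(\tfrac{y-j}{c})$ equals one and the same constant $C=C(n,c)$ (in particular independent of $s$) on $\bigcup_{m}\itvcc{m+n\sfrac{c}}{m+1}$ when $\sfrac{b}\ge0$, and on $\bigcup_{m}\itvcc{m}{m+1+n\sfrac{c}}$ when $\sfrac{b}\le0$.

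Next I would verify that, for $x$ in the set claimed in \eqref{eq:zeros-zak-1} in case (i) (resp.\ \eqref{eq:zeros-zak-2} in case (ii)), the point $(x-s)/r$ lands in the relevant constancy set for \emph{all} $s\in\{0,\dots,r-1\}$ at once. In case (i) this is the requirement $x\notin\bigcup_{s=0}^{r-1}\bigcup_{m\in\Z}\bigl(rm+s,\,rm+s+rn\sfrac{c}\bigr)$; since $rn\sfrac{c}=n\sfrac{b}$ and the numbers $rm+s$ exhaust $\Z$, the forbidden set is exactly $\bigcup_{\ell\in\Z}(\ell,\ell+n\sfrac{b})$, and (using $n\abs{\sfrac{b}}\le1$) its complement is exactly $\bigcup_{m}\itvcc{m+n\sfrac{b}}{m+1}$; case (ii) is symmetric. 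For such $x$ one has $G_s(x)=C$ for every $s$, hence
\[
  Z_{1/b}N_n\bigl(x,\tfrac{k_0}{r}\bigr)=\frac{C}{\sqrt b}\sum_{s=0}^{r-1}\myexp{2\pi i s k_0/r}=0,
\]
since $r\nmid k_0$ makes $\myexp{2\pi i k_0/r}$ a nontrivial $r$-th root of unity, so the geometric sum vanishes.

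The only step I expect to require real care is the penultimate one — checking that the $s$-union of the ``gap'' intervals from Lemma~\ref{lem:partly-pou} collapses exactly to $\bigcup_\ell(\ell,\ell+n\sfrac{b})$, and identifying its complement. The inequality $n\abs{\sfrac{c}}\le\tfrac1r$ is what makes it go through: the $r$ sample points $(x-s)/r$, $s=0,\dots,r-1$, are spaced exactly $\tfrac1r$ apart, so within each period at most one of them can fall into a gap (of length $n\abs{\sfrac{c}}\le\tfrac1r$), and that single residue class is precisely the one removed from the claimed set. The integer endpoints need no separate treatment, being already right-endpoints of the intervals $\itvcc{m+n\sfrac{b}}{m+1}$. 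Everything else — the residue splitting, the substitution $c=b/r$, and the root-of-unity cancellation — is routine.
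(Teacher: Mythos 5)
Your argument is correct and is essentially the paper's own proof: both split the Zak series into residue classes modulo $\round{b}$, apply Lemma~\ref{lem:partly-pou} with $c=b/\round{b}$ to see that each residue-class periodization equals one common constant on the intersection set $\bigcup_{m}\itvcc{m+n\sfrac{b}}{m+1}$ (resp.\ the mirrored set in case (ii)), and conclude by the vanishing geometric sum of the nontrivial $\round{b}$-th roots of unity. The only differences are notational (your indexing $k=\round{b}j+s$ versus the paper's reindexed sum), so no further comment is needed.
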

\begin{proof}
We will only prove (i) as the proof of (ii) is similar. So assume that
$\sfrac{b}\ge 0$. Let $\nu_0 = s/\round{b}$ for
some $s \in \Z \setminus \round{b}\Z$.
We rewrite the Zak transform of $N_n$ as follows:
\begin{align}
  Z_{1/b}N_n(x,\nu_0) &= \sum_{k \in \Z} N_n(\tfrac1b(x+k)) \myexp{-2\pi
    i k \tfrac{s}{\round{b}}} \nonumber \\
 &= \sum_{\ell=0}^{\round{b}-1} \sum_{r\in \Z}
 N_n(\tfrac1b(x+\round{b}r+\ell)) \myexp{-2\pi
    i \ell \tfrac{s}{\round{b}}}. \label{eq:zak-rewrite}
\end{align}
We claim that there exists a constant $d \in \R$ such that for every
$\ell\in \set{0,1,\dots,\round{b}-1}$:
\[
\sum_{r\in \Z}
 N_n(\tfrac1b(x+\round{b}r+\ell)) = d \qquad \text{for }  x \in
\bigcup_{m\in \Z}\itvcc{m+n\sfrac{b}}{m+1}.
\]
Fix $\ell \in \set{0,1,\dots,\round{b}-1}$ for a moment. Since 
\[
\frac1b(x+\round{b}r+\ell)=\frac{\round{b}}{b}\bigl((x+\ell)/\round{b}+r\bigr),
\]
it follows from Lemma~\ref{lem:partly-pou} that $\sum_{r\in \Z}
 N_n(\tfrac1b(x+\round{b}r+\ell))$ is constant for 
\[ 
  \frac{x+\ell}{\round{b}} \in \bigcup_{m\in \Z}\itvcc{m+n\sfrac{\tfrac{b}{\round{b}}}}{m+1},
\]
 that is, for
\[ 
  x \in \left(\bigcup_{m\in \Z}\itvcc{\round{b}m+n\sfrac{b}}{\round{b}(m+1)}\right)-\ell.
\]
Taking the intersection over $\ell \in \set{0,1,\dots,\round{b}-1}$
completes the proof of the claim. 

Hence, for $x \in \bigcup_{m\in \Z}\itvcc{m+n\sfrac{c}}{m+1}$, we can
continue \eqref{eq:zak-rewrite}: 
\begin{equation*}
  Z_{1/b}N_n(x,\nu_0) = \sum_{\ell=0}^{\round{b}-1} d \myexp{-2\pi
    i \ell \tfrac{s}{\round{b}}} = 0.
\end{equation*}
\end{proof}
Note that method of the above proof relies crucially  on the
assumption $b>\frac32$ which guarantees $\round{b}\ge 2$.

\subsection{A family of counterexamples}

The main result of this note is the following family of obstructions
for the frame property of $\gaborG{B_n}$.
\label{sec:proof-main-theorem}
\begin{thm}
\label{thm:frame-set-Bn}
  Let $n \in\N$, let $a>0,b > \frac32$, and let $p,q \in \N$, where
  $\gcd{(p,q)}=1$. If
 \[ 
 ab=\frac{p}{q} \quad \text{and} \quad \abs{\sfrac{b}} \le \frac{1}{nq},
\]
then $\cG (B_n,a,b)$ is \emph{not}
  a frame for $L^2(\R)$. 
\end{thm}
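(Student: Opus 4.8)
The strategy is to use the Zibulski--Zeevi criterion (Theorem~\ref{thm:ZZ_singular_values}) together with the zero structure of the Zak transform of B-splines established in Proposition~\ref{thm:zeros-zak}. Since the frame set is translation-invariant in the generator, I work with the cardinal B-spline $N_n$ in place of $B_n$. The key observation is that $\cG(N_n,a,b)$ fails to be a frame if and only if, on a non-null set of $(x,\nu)$, the column vectors $\phi^{N_n}_\ell(x,\nu)$, $\ell = 0,\dots,q-1$, fail to span $\C^p$ uniformly; it suffices to exhibit a non-null set of $(x,\nu)$ on which they all lie in a common proper subspace of $\C^p$ (equivalently, the $p\times q$ matrix $\Phi^{N_n}(x,\nu)$ drops rank, i.e.\ has a zero singular value), since by continuity of $Z_{1/b}N_n$ for $n\ge 2$ this forces the infimum of the smallest singular value to be zero.

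\emph{Main steps.}
First, I unpack the entries of $\phi^{N_n}_\ell(x,\nu)$: each entry is a value $(Z_{1/b}N_n)(x-\ell p/q,\ \nu+k/p)$ for $k=0,\dots,p-1$. Second, I apply Proposition~\ref{thm:zeros-zak}: the hypothesis $|\sfrac{b}|\le \frac{1}{nq}\le \frac1n$ (and $b>\tfrac32$, so $\round b\ge 2$) guarantees that $Z_{1/b}N_n(x',\tfrac{s}{\round b})=0$ for all $s\in\Z\setminus\round b\,\Z$, provided $x'$ lies in the ``partial partition of unity'' set $\bigcup_m\itvcc{m+n\sfrac b}{m+1}$ (the case $\sfrac b\ge 0$; symmetrically for $\sfrac b\le 0$). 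Third — and this is the crux — I choose a favourable slice of the frequency variable: because $ab=p/q$, the second arguments $\nu+k/p$ sweep a coset, and I pick $\nu$ so that at least one of the values $\nu + k/p$ equals $s/\round b$ with $\round b\nmid s$; in fact I will pick $\nu$ rational of the form $s_0/(\round b)$ (adjusted by a multiple of $1/p$) so that the relation between $p$, $q$, $\round b$ forces a whole row of the matrix $\Phi^{N_n}(x,\nu)$ — or rather enough entries — to vanish simultaneously. Here the arithmetic condition $|\sfrac b|\le \tfrac1{nq}$ (the factor $q$!) is exactly what is needed: after the dilation argument in Proposition~\ref{thm:zeros-zak} the effective fractional part seen at scale $\round b$ is controlled, and the shifts by $\ell p/q$ for $\ell=0,\dots,q-1$ move $x$ through translates of the good set whose \emph{intersection} over all $\ell$ is still non-null precisely because $n\sfrac b\cdot q \le 1$ leaves room. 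Fourth, having found a non-null rectangle of $(x,\nu)$ on which a fixed nonzero linear functional annihilates every $\phi^{N_n}_\ell(x,\nu)$ (e.g.\ the coordinate projection onto the vanishing entry, common to all $\ell$ after the intersection argument), I conclude via Theorem~\ref{thm:ZZ_singular_values}(ii) that the smallest singular value has infimum $0$, hence $\cG(N_n,a,b)=\cG(B_n,a,b)$ is not a frame. Finally I note the counterexample occurs near each $b\in\{2,3,\dots\}$, since $b>\tfrac32$ with $|\sfrac b|\le\tfrac1{nq}$ means $b$ is within $\tfrac1{nq}$ of an integer $\ge 2$.

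\emph{Anticipated difficulty.}
The delicate point is the bookkeeping in the third step: aligning the lattice of frequency shifts $\{k/p\}_{k=0}^{p-1}$, the shift $\ell p/q$ in the spatial variable, and the denominator $\round b$ coming from the zero-set of the Zak transform, so that a \emph{single} coordinate (or a fixed nonzero combination) vanishes for \emph{all} $\ell$ on a set of positive measure in $(x,\nu)$. One must check that $\round b$ and $p$ interact correctly — essentially that there is an index $k$ with $\nu+k/p \in \tfrac1{\round b}(\Z\setminus\round b\,\Z)$ for $\nu$ ranging over a non-null set — and that the intersection $\bigcap_{\ell=0}^{q-1}\bigl(\bigcup_m\itvcc{m+n\sfrac b}{m+1}-\ell p/q\bigr)$ retains positive measure, which is where $|\sfrac b|\le \tfrac1{nq}$ rather than just $\tfrac1n$ is indispensable. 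Continuity of $Z_{1/b}N_n$ (Remark~\ref{rem:zak-Bn-continuous}, using $n\ge 2$) then upgrades ``zero on a positive-measure set'' to ``infimum of smallest singular value equals $0$'' cleanly; for $n=1$ one would have to argue slightly differently, but the theorem as stated is for general $n\in\N$ and the positive-measure conclusion already suffices for the frame bound to fail.
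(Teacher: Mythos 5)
There is a genuine gap, and it sits exactly at the step you flag as the ``crux.'' Your plan asks for a \emph{non-null} set (even a rectangle) of $(x,\nu)$ on which a whole row of $\Phi^{N_n}(x,\nu)$, or a fixed linear functional of the columns, vanishes. But Proposition~\ref{thm:zeros-zak} produces zeros of $Z_{1/b}N_n$ only at the finitely many frequencies $\nu=k/\round{b}$ with $k\in\Z\setminus\round{b}\Z$, so any set of $(x,\nu)$ on which these zeros force entries of $\Phi^{N_n}(x,\nu)$ to vanish is contained in finitely many horizontal lines and is therefore null; in particular, the condition you propose to verify, namely that there is an index $k$ with $\nu+k/p\in\tfrac{1}{\round{b}}(\Z\setminus\round{b}\Z)$ for $\nu$ ranging over a non-null set, cannot hold. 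Your use of continuity is also pointed the wrong way: if you really had rank deficiency on a positive-measure set you would not need continuity at all, whereas continuity (Remark~\ref{rem:zak-Bn-continuous}) is needed precisely because the best one can achieve is vanishing on a null set --- in fact at a single well-chosen point.

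The paper's proof repairs exactly this: it evaluates at the single point $(x_0,\nu_0)=(\tfrac1q,\tfrac{1}{\round{b}})$ and looks only at the $k=0$ coordinate of each $\phi^{N_n}_\ell(x_0,\nu_0)$, so no arithmetic alignment between $p$ and $\round{b}$ is needed. By quasi-periodicity in $x$, these coordinates are, up to unimodular factors, $Z_{1/b}N_n(\ell/q,\nu_0)$ for $\ell=1,\dots,q$, and the hypothesis $\abs{\sfrac{b}}\le\tfrac{1}{nq}$ guarantees $n\sfrac{b}\le\tfrac1q$, so every point $\ell/q$ lies in the zero interval $\itvcc{n\sfrac{b}}{1}$ of Proposition~\ref{thm:zeros-zak} (case $\sfrac{b}\ge0$; the other case is symmetric). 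Hence the first row of $\Phi^{N_n}(x_0,\nu_0)$ vanishes, the smallest singular value is $0$ at that point, and continuity of the Zak transform then forces the essential infimum of the smallest singular value to be zero, contradicting the existence of a positive lower bound via Theorem~\ref{thm:ZZ_singular_values}. Your intuition about where the factor $q$ enters (the $q$ translates by $\ell p/q$ modulo $1$ must all stay in the good set, which needs $qn\abs{\sfrac{b}}\le1$) is correct, but as written the decisive bookkeeping is deferred to an ``anticipated difficulty,'' and the positive-measure mechanism you rely on is unavailable, so the proposal does not yet constitute a proof.
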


\begin{proof}
  Let $n \in \N$ and let $ab=\frac{p}{q}$, where $b > \frac32$ and
  $\abs{\sfrac{b}} \le \frac{1}{nq}$.  Assume that $\sfrac{b} \ge
  0$. Take $x_0=1/q$ and $\nu_0=1/\round{b}$. Using the
  quasi-periodicity of the Zak transform, the first coordinate of the
  vectors $\set{\phi^{N_n}_\ell(x_0,\nu_0)}_{\ell=0}^q$ can be written
  (up to a scalar multiplication) as:
  \begin{equation}
    Z_{1/b}N_n(\tfrac{\ell}{q},\nu_0) \qquad \ell =1,2,\dots, q.\label{eq:row-1}
  \end{equation}
  It follows from Proposition~\ref{thm:zeros-zak} that
  $Z_{1/b}N_n(x,\nu_0)$ is zero for
  $x\in \itvcc{n\sfrac{b}}{1}$. Since $n\sfrac{b}\le \tfrac1q$, it
  follows that the first coordinate of all the vectors in
  $\set{\phi^{N_n}_\ell(x_0,\nu_0)}_{\ell=0}^q$ is zero, hence
  $\set{\phi^{N_n}_\ell(x_0,\nu_0)}_{\ell=0}^q$ is not a frame. By
  Theorem~\ref{thm:ZZ_singular_values} and
  Remark~\ref{rem:zak-Bn-continuous}, this shows that $\gaborG{N_n}$
  is not a frame for $L^2(\R)$. The proof of the case $\sfrac{b} <
  0$ is similar. 
\end{proof}

\begin{rem}
  \label{rem:non-optimal-range}
  The range $\abs{\sfrac{b}} \le \frac{1}{nq}$ in
  Theorem~\ref{thm:frame-set-Bn} might not be optimal in the sense
  that the non-frame property of $\gaborG{N_n}$ along the hyperbola
  $ab=\frac{p}{q}$ might extend beyond the range $\abs{\sfrac{b}} \le
  \frac{1}{nq}$. Indeed, in the proof of
  Theorem~\ref{thm:frame-set-Bn}, we show that the Zibulski-Zeevi
  matrix $\Phi^{B_n}(x,\nu)$ contains a row of zeros for some choice
  of $(x,\nu)$, but to have a non-frame property of $\gaborG{N_n}$,
  one just needs that $\Phi^{B_n}(x,\nu)$ is not of full
  rank. However, for $p=1$, that is, when $\Phi^{B_n}(x,\nu)$ is a row
  vector, we believe the range is optimal. This believe is supported
  by the fact the $\abs{\sfrac{b}} \le \frac{1}{q}$ is optimal for the
  first B-spline $N_1$, see \cite[Theorem 2.2]{DaiABC2015}.
\end{rem}

Note that any sufficiently nice function $g$ that generates a partition of unity and that
yields a partly partition of unity under perturbation of the integer
lattice as in Lemma~\ref{lem:partly-pou} for $c \approx 1$ will have
the a family of obstructions for the frame property of $\gaborG{g}$
as in Theorem~\ref{thm:frame-set-Bn}. 

\subsection{A different type of counterexamples}
\label{sec:other-type-obstr}

The following result shows that the exist other counterexamples, not
covered by those presented in Theorem~\ref{thm:frame-set-Bn}. The
hyperbolic curves from Theorem~\ref{thm:frame-set-Bn} together with
the horizontal lines $b=2,3,\dots$ form path-connected sets around
$b=2,3,\dots$ where $\gaborG{B_n}$ is not a frame. In the following
counterexamples we have $b\in\itvcc{\frac73}{\frac83}$ which is ``far'' from the two
path-connected sets at $b=2$ and $b=3$, see
Figure~\ref{fig:frame-set-B2}.

\begin{thm}\label{thm:NEWnonFrame}
If $ab=\frac56$ and $b\in\itvcc{\frac73}{\frac83}$, then
the Gabor system $\mathcal{G}(B_2,a,b)$ is not a
frame for $L^2(\R)$.
\end{thm}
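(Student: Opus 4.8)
The plan is to use Theorem~\ref{thm:ZZ_singular_values} with the cardinal B-spline $N_2$ in place of $B_2$ (recall $\cF(N_2)=\cF(B_2)$). Here $ab=\tfrac56$ has $p=5$ and $q=6$, so the Zibulski--Zeevi matrix $\Phi^{N_2}(x,\nu)$ is of size $5\times6$, and by Theorem~\ref{thm:ZZ_singular_values} the system $\gaborG{B_2}$ fails to be a frame as soon as $\Phi^{N_2}(x,\nu)$ fails to have full rank $5$ on a set of positive measure. Since $n=2$, the entries $Z_{1/b}N_2(\cdot,\cdot)$, and hence the smallest singular value of $\Phi^{N_2}(x,\nu)$, are continuous (Remark~\ref{rem:zak-Bn-continuous}); therefore it is enough to exhibit a \emph{single} point $(x_0,\nu_0)$ at which $\rank\Phi^{N_2}(x_0,\nu_0)\le4$, i.e.\ at which the five rows of $\Phi^{N_2}(x_0,\nu_0)$ are linearly dependent, for then the smallest singular value drops below any $\varepsilon>0$ on an open neighbourhood of $(x_0,\nu_0)$.

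The structural reduction I would use is the following. Fix $x_0$ and write $z_k:=\myexp{-2\pi i(\nu_0+k/5)}=z_0\,\myexp{-2\pi ik/5}$ for $k=0,\dots,4$, where $z_0=\myexp{-2\pi i\nu_0}$; these are the five distinct roots of $z^5=z_0^{\,5}$. Expanding $Z_{1/b}N_2(x_0-5\ell/6,\nu_0+k/5)$ from \eqref{eq:zakTransform}, using the quasi-periodicity of the Zak transform and the piecewise-affine form of $N_2$, one obtains
\[
  \Phi^{N_2}(x_0,\nu_0)_{k,\ell}=c\,f_\ell(z_k),\qquad k=0,\dots,4,\ \ell=0,\dots,5,
\]
with $c\neq0$ and each $f_\ell$ an explicit polynomial whose coefficients are affine in $x_0$ (which piece of $N_2$, hence which $f_\ell$, occurs is governed by the breakpoints of $N_2$ and by $\round{b}$). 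Because $z_k^{\,5}=z_0^{\,5}$ for all $k$, replacing each $f_\ell$ by its remainder modulo $z^5-z_0^{\,5}$ does not change its values at the $z_k$; collecting the reduced coefficients into a $6\times5$ matrix $\widetilde M=\widetilde M(x_0,\zeta)$ with $\zeta:=z_0^{\,5}=\myexp{-10\pi i\nu_0}$ gives $\Phi^{N_2}(x_0,\nu_0)=c\,V\widetilde M^{\top}$, where $V=(z_k^{\,j})_{k,j=0}^{4}$ is the Vandermonde matrix of the distinct nodes $z_k$. As $V$ is invertible, $\rank\Phi^{N_2}(x_0,\nu_0)=\rank\widetilde M$. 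It therefore suffices to choose $x_0$ and $\nu_0$ (equivalently $\zeta$) so that the explicit $6\times5$ matrix $\widetilde M(x_0,\zeta)$, whose entries are affine in $x_0$ and polynomial in $\zeta$, has rank $\le4$; this can be certified either by exhibiting a nonzero vector in its left kernel or by checking that each of its six $5\times5$ minors vanishes at the chosen values.

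To locate a good point I would be guided by Proposition~\ref{thm:zeros-zak}: for $b\in\itvcc{7/3}{5/2}$ one has $\round{b}=2$ and $Z_{1/b}N_2$ vanishes on segments along $\nu=\tfrac12$, so I would take $\nu_0$ on or near $\tfrac12$ (this makes a block of entries of $\widetilde M$ vanish, cutting down the minors to be checked); for $b\in\itvcc{5/2}{8/3}$ one has $\round{b}=3$ and $Z_{1/b}N_2$ vanishes along $\nu\in\{\tfrac13,\tfrac23\}$, so I would take $\nu_0$ on or near $\tfrac13$. Then $x_0$ is chosen so that, in the resulting explicit $\widetilde M$, the remaining $5\times5$ minors all vanish.

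I expect the main obstacle to be exactly this last computation, carried out uniformly in $b$: one has to write $Z_{1/b}N_2$ explicitly for $b$ in each of the two subintervals --- the piecewise structure of $x\mapsto N_2(x/b)$ shifts as $b$ varies, so there is nontrivial bookkeeping --- and then verify that $\widetilde M(x_0,\zeta)$, with $x_0$ and $\nu_0$ chosen continuously in $b$ (possibly in subcases), drops rank for \emph{every} $b\in\itvcc{7/3}{8/3}$, matching up at the junction $b=\tfrac52$ where $\round{b}$ jumps. The rank drop is not generic --- a $6\times5$ matrix does not ordinarily have all six maximal minors vanish --- and is ultimately forced by the partition-of-unity identities behind Lemma~\ref{lem:partly-pou} and Proposition~\ref{thm:zeros-zak}; this is also why this family escapes Theorem~\ref{thm:frame-set-Bn} and must be treated by hand.
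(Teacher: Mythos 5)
There is a genuine gap: your argument stops exactly where the theorem begins. The reduction you set up (it suffices, by continuity of the entries and Theorem~\ref{thm:ZZ_singular_values}, to exhibit a single point where the $5\times6$ matrix has rank $\le4$; Vandermonde factorization modulo $z^5-z_0^{\,5}$) is sound, but you never produce the point $(x_0,\nu_0)$ nor the linear dependency, and you acknowledge that this computation is the ``main obstacle.'' Since, as you yourself note, a rank drop of a $5\times 6$ matrix is non-generic, exhibiting it uniformly for all $b\in\itvcc{7/3}{8/3}$ \emph{is} the content of Theorem~\ref{thm:NEWnonFrame}; a proposal that defers it proves nothing. Moreover, the guidance you propose is unreliable: at most one of the five frequencies $\nu_0+k/5$ can be of the form $j/\round{b}$, so Proposition~\ref{thm:zeros-zak} can zero out entries in at most one row, and only for those $\ell$ with $x_0-5\ell/6$ falling in the zero segments, whose length $1-2\,\abs{\sfrac{b}}$ shrinks to $0$ as $b\to\tfrac52$ --- i.e.\ precisely in the middle of the interval your scheme loses its leverage. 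The dependency that actually exists is not a consequence of those Zak-transform zeros (that mechanism is exactly what is \emph{unavailable} here, since $\abs{\sfrac{b}}>\tfrac1{nq}$); it comes from the piecewise-linearity of $B_2$.

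For comparison, the paper works with the companion matrix $\Psi^{B_2}$ (not the Zak-transform matrix) at the single point $(x,\nu)=(0,0)$. For $ab=\tfrac56$ with $a\in\itvcc{5/16}{5/14}$ all sampling locations $a\ell+k/b \bmod 6a$ for the rows $k=1,\dots,4$ lie in $\itvcc{-\tfrac{14}{5}a}{\tfrac{14}{5}a}$, so each entry is a \emph{single} sample of $B_2$, and the tent shape of $B_2$ gives the explicit identities
\[
R_2-R_5=\begin{bmatrix}0&-2a&-2a&0&2a&2a\end{bmatrix},\qquad
R_3-R_4=\begin{bmatrix}0&-a&-a&0&a&a\end{bmatrix},
\]
hence $R_2-2R_3+2R_4-R_5=0$ for every $a$ in the range: a concrete row dependency, valid uniformly in $b\in\itvcc{7/3}{8/3}$, with no case split at $b=\tfrac52$ and no minor computations. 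If you want to complete your route through $\Phi^{N_2}$, you must carry out the bookkeeping you postponed and exhibit an analogous explicit left-kernel vector (or vanishing of all six $5\times5$ minors) for each $b$ in the interval; as it stands the proposal is a plan, not a proof.
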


\begin{proof}
Since $ab=\frac{5}{6}$, the Zibulski-Zeevi type matrix $\Psi^{B_2}(x,\nu)$
for the Gabor system $\mathcal{G}(B_2,a,b)$ is of
size $5 \times 6$. At $(x,\nu)=(0,0)$ it reads:
\[
\Psi^{B_2}(0,0)=\left[ b^{-\tfrac12}\sum_{n\in\mathbb{Z}} B_2(6an+a\ell+k/b) 
\right]_{\substack{k=0,\dots,4\\\ell=0,\dots,5}}. 
\]
The $(a,b)$-values in question can be expressed as $1/b=6a/5$ for $a
\in \itvcc{\frac5{16}}{\frac5{14}}$. We consider the entries in
$\Psi^{B_2}(0,0)$ as $6a$-periodizations of $B_2$ at sampling locations
$a\ell+k/b$, that is, at $a\ell+\tfrac65 ak \pmod{6a}$. The $2$nd  and $5$th row of
$\Psi^{B_2}(0,0)$ are sampled at locations 
\begin{align*} k=1: \quad &\frac{6}{5}a,\frac{11}{5}a,-\frac{14}{5}a,-\frac{9}{5}a,-\frac{4}{5}a,\frac{1}{5}a,\mod{6a}
\intertext{and}
k=4: \quad  &-\frac{6}{5}a,-\frac{1}{5}a,\frac{4}{5}a,\frac{9}{5}a,\frac{14}{5}a,-\frac{11}{5}a, \mod{6a},
\end{align*}
respectively. Note that the $6a$-periodizations of $B_2$ at any sampling location
in the interval $\itvcc{-\tfrac{14}{5}a}{\tfrac{14}{5}a} \bmod{6a}$
only has one nonzero term. Therefore, by definition of $B_2$, we directly
see that
\[ 
R_2-R_5 =
\begin{bmatrix}
  0 & -2a & -2a & 0 & 2a & 2a
\end{bmatrix}
\]
where $R_i$ denotes the $i$th row of $\Psi^{B_2}(0,0)$. In the same
way, we see that 
\[ 
R_3-R_4 =
\begin{bmatrix}
  0 & -a & -a & 0 & a & a
\end{bmatrix}
\]
for  $a
\in \itvcc{\frac5{16}}{\frac5{14}}$.
We see that $\Psi^{B_2}(0,0)$ does not have full rank, implying that the
smallest singular values is zero. By
Theorem~\ref{thm:ZZ_singular_values}, the lower frame bound of
$\mathcal{G}(B_2,a,\frac{5}{6a})$ is zero for $a
\in \itvcc{\frac5{16}}{\frac5{14}}$. 
\end{proof}

Theorem~\ref{thm:NEWnonFrame} raises a number of questions. A first
question is
whether the non-frame property along $ab=\tfrac56$ in
extends beyond
$b\in\itvcc{\frac73}{\frac83}$. The numerical computations in Figure~\ref{fig:plots}
suggest that $b\in \itvcc{\frac73}{\frac83}$ in
Theorem~\ref{thm:NEWnonFrame} is, at the least, very close to being
optimal. We remark that the method of proof for
Theorem~\ref{thm:NEWnonFrame} breaks down for
$b\notin\itvcc{\frac73}{\frac83}$ because these 
$(a,b)$-values no longer guarantee sampling locations
in $\itvcc{-\tfrac{14}{5}a}{\tfrac{14}{5}a} \bmod{6a}$, which 
causes some of the $6a$-periodizations of $B_2$ to have more than one nonzero
term. 
\begin{figure}[!h]
\begin{minipage}[t]{.45\textwidth}
\centering
\definecolor{mycolor1}{rgb}{0.00000,0.44700,0.74100}%
\begin{tikzpicture}[scale=0.5]
\begin{axis}[%
width=4.602in,
height=3.513in,
at={(0.772in,0.553in)},
scale only axis,
xmin=0,
xmax=0.4,
xlabel={$a$},
ymin=0,
ymax=0.5,
anchor=west,
ylabel={$\sqrt{A}$},
yticklabels={0,0,0.05,0.10,0.15,0.20,0.25,0.30,0.35,0.40,0.45,0.50},
xticklabels={0,0,0.05,0.10,0.15,0.20,0.25,0.30,0.35,0.40},
axis background/.style={fill=white}
]
\addplot [thick,color=mycolor1,solid,forget plot]
  table[row sep=crcr]{%
0.016	0.481554188685709\\
0.0166666666666667	0.471434296654433\\
0.0173913043478261	0.461843809623922\\
0.0181818181818182	0.451270561386478\\
0.019047619047619	0.441250117097757\\
0.02	0.43009378586251\\
0.0210526315789474	0.419637983443951\\
0.0222222222222222	0.407855598963687\\
0.0235294117647059	0.396842235746743\\
0.025	0.384253474574125\\
0.0266666666666667	0.372642154425484\\
0.0285714285714286	0.359090557923662\\
0.0307692307692308	0.346730806715969\\
0.032	0.340509471051936\\
0.0333333333333333	0.331964154428668\\
0.0347826086956522	0.326571762044522\\
0.0363636363636364	0.318726945547504\\
0.0380952380952381	0.312009576264557\\
0.04	0.302287394752538\\
0.0421052631578947	0.296726638502169\\
0.0444444444444444	0.287852404237674\\
0.0470588235294118	0.280606966047634\\
0.048	0.27801589882223\\
0.05	0.269136651620554\\
0.0521739130434783	0.266630786398686\\
0.0533333333333333	0.263492673356918\\
0.0545454545454545	0.260529689512641\\
0.0571428571428571	0.253138764667445\\
0.06	0.248301610134073\\
0.0615384615384615	0.245168524714611\\
0.0631578947368421	0.242253740657526\\
0.064	0.240409609784497\\
0.0666666666666667	0.230760056122787\\
0.0695652173913043	0.230566653015877\\
0.0705882352941176	0.229072966849733\\
0.0727272727272727	0.225358563635028\\
0.075	0.221813843201186\\
0.0761904761904762	0.219697271299221\\
0.08	0.212455977107009\\
0.0833333333333333	0.21019115836236\\
0.0842105263157895	0.208972201320256\\
0.0857142857142857	0.207276661140634\\
0.0869565217391304	0.20617399047615\\
0.0888888888888889	0.20351769888028\\
0.0909090909090909	0.20144059011252\\
0.0923076923076923	0.200097098247397\\
0.0941176470588235	0.198107126771845\\
0.0952380952380952	0.196968739445464\\
0.096	0.196331460756506\\
0.1	0.182854068567043\\
0.104347826086957	0.188133706858536\\
0.105263157894737	0.187266988633506\\
0.106666666666667	0.185999082476694\\
0.109090909090909	0.18378798377336\\
0.111111111111111	0.18194686485373\\
0.112	0.180991717003191\\
0.114285714285714	0.178914314778492\\
0.116666666666667	0.177684152002844\\
0.117647058823529	0.177055681593351\\
0.121739130434783	0.164734120208954\\
0.123076923076923	0.169774542245311\\
0.125	0.16939120767278\\
0.126315789473684	0.169224028127652\\
0.127272727272727	0.168131510332278\\
0.128	0.168882621068234\\
0.133333333333333	0.160343667405904\\
0.139130434782609	0.161855428459018\\
0.14	0.160738212851041\\
0.141176470588235	0.160260909834545\\
0.142857142857143	0.158660398870734\\
0.144	0.158474859226892\\
0.145454545454545	0.156360244096958\\
0.147368421052632	0.149500178532873\\
0.15	0.155093779942093\\
0.152380952380952	0.155124315788888\\
0.153846153846154	0.154409992789569\\
0.155555555555556	0.153277623368531\\
0.156521739130435	0.153620921241946\\
0.16	0.150053775613128\\
0.163636363636364	0.15001927885823\\
0.164705882352941	0.148123803597367\\
0.166666666666667	0.147801852978869\\
0.168421052631579	0.14728720842158\\
0.171428571428571	0.145462319159406\\
0.173913043478261	0.140553063503745\\
0.175	0.139976635652744\\
0.176	0.143125306023907\\
0.177777777777778	0.143493215374072\\
0.18	0.142853841926158\\
0.181818181818182	0.141699193999658\\
0.183333333333333	0.140530921394936\\
0.184615384615385	0.139248669735021\\
0.186666666666667	0.134830219366599\\
0.188235294117647	0.135054650334924\\
0.189473684210526	0.134255848265663\\
0.19047619047619	0.133280740347127\\
0.191304347826087	0.132964394055285\\
0.192	0.132161587895374\\
0.2	0.104588552144436\\
0.208	0.123215167892562\\
0.208695652173913	0.123116649388812\\
0.20952380952381	0.123196864581109\\
0.210526315789474	0.122670042062714\\
0.211764705882353	0.12267309933969\\
0.213333333333333	0.122184408392654\\
0.215384615384615	0.119219479627642\\
0.216666666666667	0.12204082949289\\
0.218181818181818	0.124645859745301\\
0.22	0.125644265870533\\
0.222222222222222	0.126367349647862\\
0.224	0.126680613008359\\
0.225	0.126461119349195\\
0.226086956521739	0.126267992580142\\
0.228571428571429	0.124859928112541\\
0.231578947368421	0.120362050960607\\
0.233333333333333	0.114801193846278\\
0.235294117647059	0.112168003063303\\
0.236363636363636	0.112714240321407\\
0.24	0.118300506920811\\
0.243478260869565	0.113401558350849\\
0.244444444444444	0.114660967646335\\
0.246153846153846	0.117037340935428\\
0.247619047619048	0.117304219548924\\
0.25	0.116634952755509\\
0.25	0.116634952755509\\
0.252631578947368	0.116727824879107\\
0.254545454545455	0.115191690318571\\
0.256	0.116729102613846\\
0.257142857142857	0.116706015256214\\
0.258823529411765	0.116732262961595\\
0.26	0.116222028616305\\
0.260869565217391	0.116157955819874\\
0.266666666666667	0.111325939668673\\
0.272	0.113479842641764\\
0.272727272727273	0.113293220690652\\
0.273684210526316	0.113151936262176\\
0.275	0.112906760990954\\
0.276923076923077	0.112555981237366\\
0.278260869565217	0.112056614299611\\
0.28	0.110665371243933\\
0.282352941176471	0.111128756841574\\
0.283333333333333	0.110882184208691\\
0.285714285714286	0.110097382374463\\
0.285714285714286	0.110097382374463\\
0.288	0.110095740746634\\
0.288888888888889	0.109546888553266\\
0.290909090909091	0.108811899951617\\
0.293333333333333	0.106989807827584\\
0.294736842105263	0.10481570070677\\
0.295652173913043	0.104373684165459\\
0.3	0.0987331499563241\\
0.304	0.0731043599521697\\
0.304761904761905	0.0684847010376727\\
0.305882352941176	0.0620484301237305\\
0.307692307692308	0.0525132431307488\\
0.309090909090909	0.0458356849499166\\
0.311111111111111	0.0372046796383018\\
0.31304347826087	0.0306809159781879\\
0.314285714285714	0.0259233372887492\\
0.315789473684211	0.0215497300141748\\
0.316666666666667	0.0190862468290263\\
0.32	0.0103749163559989\\
0.323809523809524	0.00548342305409309\\
0.325	0.00411065596243283\\
0.327272727272727	0.0034780326599614\\
0.329411764705882	0.000860165478839569\\
0.330434782608696	0.000463541324180909\\
0.333333333333333	1.54775264602868e-17\\
0.336	0.000379947716201682\\
0.336842105263158	0.000601519914956391\\
0.338461538461538	0.00215326457466025\\
0.34	0.00198194665638185\\
0.342857142857143	0.00375490558469271\\
0.345454545454545	0.00568871657606282\\
0.346666666666667	0.00666155517539556\\
0.347826086956522	0.00761389847341775\\
0.35	0.00939731077776783\\
0.352	0.0112280285279755\\
0.352941176470588	0.0117200170005918\\
0.355555555555556	0.00392143754110717\\
0.357894736842105	0.000950252829385864\\
0.36	0.00327350770828235\\
0.361904761904762	0.00138137883401489\\
0.363636363636364	0.000675089002007438\\
0.365217391304348	0.000965808791899878\\
0.366666666666667	0.0022079344085758\\
0.368	0.00482892174306198\\
0.369230769230769	0.00678044827417464\\
0.371428571428571	0.00850652778679055\\
0.373333333333333	0.00853007314577968\\
0.375	0.00866377563027741\\
0.376470588235294	0.00871079842021392\\
0.377777777777778	0.00866264705038671\\
0.378947368421053	0.00778526697094949\\
0.38	0.00675262023279902\\
0.380952380952381	0.00593230501865234\\
0.381818181818182	0.00519657379840492\\
0.382608695652174	0.00440382583080068\\
0.383333333333333	0.00359615734867351\\
0.384	0.0029200361148488\\
};
 \addplot [thick,color=red,solid,forget plot]
   table[row sep=crcr]{%
 0.333333333333333	0.00\\
 0.333333333333333	0.02\\
 };
\end{axis}
\end{tikzpicture}
\subcaption{Estimate of $\sqrt{A}$ along the horizontal line
          $(a,\tfrac52)$ for $a\in
          \itvcc{0}{\tfrac25}$ in the $(a,b)$-plane.}
    \end{minipage}
\hspace{1em}
    \begin{minipage}[t]{0.45\textwidth}
        \centering
\definecolor{mycolor1}{rgb}{0.00000,0.44700,0.74100}%
%
\begin{tikzpicture}[scale=0.5]
\begin{axis}[%
width=4.521in,
height=3.559in,
at={(0.758in,0.488in)},
scale only axis,
xmin=2.2,
xmax=2.8,
xlabel={$b$},
ymin=0,
ymax=0.025,
ylabel={$\sqrt{A}$},
axis background/.style={fill=white},
yticklabel style={
        /pgf/number format/fixed,
        /pgf/number format/precision=3
},
scaled y ticks=false
]
\addplot [thick,color=mycolor1,solid,forget plot]
  table[row sep=crcr]{%
2.15	0.0114969470399483\\
2.16	0.0129964197662096\\
2.17	0.0145993357883583\\
2.18	0.0166062417795031\\
2.19	0.0185706134320666\\
2.2	0.0204822730110974\\
2.21	0.0197423202628218\\
2.22	0.0171158188691723\\
2.23	0.0143071573599921\\
2.24	0.0113216079517324\\
2.25	0.00850713987729348\\
2.26	0.00763398589705999\\
2.27	0.00703409024346112\\
2.28	0.00633050345632263\\
2.29	0.00549385089700934\\
2.3	0.0045084066498478\\
2.31	0.00336867983058096\\
2.32	0.00202051305267011\\
2.33	0.000520845497747336\\
2.34	6.23951497978352e-17\\
2.35	6.34505493879639e-17\\
2.36	8.42179086681672e-17\\
2.37	4.23867131562466e-17\\
2.38	7.4033628929406e-17\\
2.39	4.6975131340903e-17\\
2.4	3.48149729846049e-17\\
2.41	5.99946355109227e-17\\
2.42	2.8699961221436e-17\\
2.43	2.75270035393601e-17\\
2.44	2.04337210164666e-17\\
2.45	1.64648135693033e-17\\
2.46	1.98639995028736e-17\\
2.47	1.18260623880681e-17\\
2.48	1.04487577702665e-17\\
2.49	2.71163397515187e-17\\
2.5	1.54775264602868e-17\\
2.51	2.25216950969598e-17\\
2.52	2.22567414090237e-17\\
2.53	1.54241310094722e-17\\
2.54	2.58599713310639e-17\\
2.55	2.9212219358232e-17\\
2.56	1.67434227683358e-17\\
2.57	2.97210593354904e-17\\
2.58	2.85218906677723e-17\\
2.59	2.2839038507597e-17\\
2.6	2.7980112913949e-17\\
2.61	3.80062139739359e-17\\
2.62	3.57566843218655e-17\\
2.63	5.08363117884303e-17\\
2.64	5.98442346875835e-17\\
2.65	3.5626367500237e-17\\
2.66	3.89792478174825e-17\\
2.67	0.000454520602903055\\
2.68	0.00174939014457642\\
2.69	0.00289491568037532\\
2.7	0.0038448721606382\\
2.71	0.0046498756775937\\
2.72	0.00531772221646339\\
2.73	0.00586441213562912\\
2.74	0.00631659348719815\\
2.75	0.00635122524369842\\
2.76	0.00631494446551022\\
2.77	0.00621598172788005\\
2.78	0.00599042927154455\\
2.79	0.00565940556862505\\
2.8	0.00507373216579751\\
2.81	0.00435871299766403\\
2.82	0.00353654156900303\\
2.83	0.00266076989146642\\
2.84	0.00222991023751109\\
2.85	0.00229453615065115\\
};
\addplot [thick,color=red,solid,forget plot]
  table[row sep=crcr]{%
2.5	0\\
2.5	0.001\\
};
\end{axis}
\end{tikzpicture}%
        \subcaption{Estimate of $\sqrt{A}$ along the hyperbola
          $(\tfrac{5}{6b},b)$ for $b\in
          \itvcc{2.2}{2.8}$ in the $(a,b)$-plane.}
      \end{minipage}
\caption{Numerical approximations (using Matlab) of the lower frame bound for
          $\mathcal{G}(B_2,a,b)$ near the point
          $(a,b)=(\tfrac13,\tfrac52)$ (marked with red).}
\label{fig:plots}
  \end{figure}
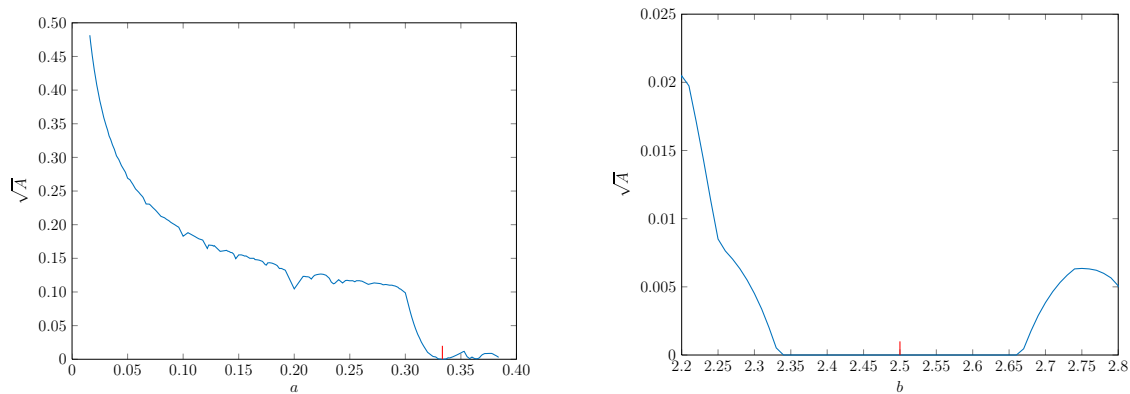
 A second question is whether this type  of counterexamples is
singular.  We do not believe that this is the case. 
In the spirit of \cite{MR3232589}, although not as
bold, we make a new conjecture. Our conjecture is based on
Theorem~\ref{thm:NEWnonFrame} and
exact symbolic calculations in Maple of the Zibulski-Zeevi representation
associated with  $\mathcal{G}(B_2,a,b)$.
\begin{conj}\label{con:NEW1}
The Gabor system $\mathcal{G}(B_2,a_0,b_0)$ is not a frame for 
\begin{equation}\label{eq:con_newNonFrame}
a_0= \frac{1}{2m+1}, \ b_0=\frac{2k+1}{2}, \ k,m\in\mathbb{N}, \ k>m, \ a_0 b_0 < 1.
\end{equation}
Furthermore, the Gabor system $\mathcal{G}(B_2,a,b)$ is not a frame along the hyperbolas
\begin{equation}\label{eq:con_hyperbelStykke}
\ ab=\frac{2k+1}{2\left(2m+1\right)}, \quad \text{for } b\in\left[b_0-a_0\frac{k-m}{2},b_0+a_0\frac{k-m}{2}\right],
\end{equation}
for every $a_0$ and $b_0$ defined by (\ref{eq:con_newNonFrame}).
\end{conj}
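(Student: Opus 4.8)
The plan is to extend the argument of Theorem~\ref{thm:NEWnonFrame}; the zero-of-the-Zak-transform route behind Theorem~\ref{thm:frame-set-Bn} is not available here, since $\abs{\sfrac{b_0}}=\tfrac12$ sits exactly at the boundary $\tfrac1n=\tfrac12$ where the partial partition of unity of Lemma~\ref{lem:partly-pou} degenerates to isolated points. Write $ab=\frac{2k+1}{2(2m+1)}$ in lowest terms as $\frac pq$; with $g:=\gcd(2k+1,2m+1)$ this means $p=(2k+1)/g$ and $q=2(2m+1)/g$, so $p$ is odd; moreover the constraint $a_0b_0<1$ forces $m<k\le 2m$, and a short check then gives $p\ge 5$ in every case. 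By Theorem~\ref{thm:ZZ_singular_values}(iii), and since the entries of $\Psi^{B_2}$ depend continuously on $(x,\nu)$ for $n=2$ (see Remark~\ref{rem:zak-Bn-continuous}), it suffices to exhibit, for every $(a,b)$ on the stated hyperbola segment, one point at which the $p\times q$ matrix $\Psi^{B_2}$ fails to have rank $p$. We take $(x,\nu)=(0,0)$, where, up to the overall factor $b^{-1/2}$, the entry in row $k'\in\set{0,\dots,p-1}$, column $\ell\in\set{0,\dots,q-1}$, is the $aq$-periodization $P(a\ell+k'/b)$ of $B_2$, with $P(t)=\sum_{n\in\Z}B_2(aqn+t)$.

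The structural input is a reflection symmetry among the rows. Since $(p-k')/b=p/b-k'/b=aq-k'/b\equiv -k'/b\pmod{aq}$ and $P$ is even, row $p-k'$ is row $k'$ read through the column involution $\ell\mapsto(q-\ell)\bmod q$. Hence each difference $R_{k'}-R_{p-k'}$ is antisymmetric under that involution: its $0$th entry vanishes (as does its $(q/2)$th, $q$ being even), and its $\ell$th and $(q-\ell)$th entries are opposite. There are $(p-1)/2\ge 2$ such differences, $k'=1,\dots,(p-1)/2$, and the aim is to show they all lie on one line, $R_{k'}-R_{p-k'}=w_{k'}(a)\,v$ for a single fixed vector $v$. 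Any such relation is a nontrivial linear dependence among $R_1,\dots,R_{p-1}$, so $\rank\Psi^{B_2}(0,0)\le p-1<p$, and therefore $\gaborG{B_2}$ is not a frame.

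To make the differences explicit one localizes the periodizations. Along $b\in\itvcc{b_0-a_0\frac{k-m}{2}}{b_0+a_0\frac{k-m}{2}}$ one shows that every sampling point $a\ell+k'/b\pmod{aq}$ occurring in rows $k'=1,\dots,p-1$ reduces to a representative $s$ with $\abs s\le1$ at which $P$ has a single nonzero term, so that matrix entry equals $B_2(s)=1-\abs s$; the two endpoints of the segment are precisely the thresholds at which, for some occurring point, either a second term enters $P$ or the representative crosses $\pm1$. Granting this, the entries of $R_{k'}-R_{p-k'}$ become explicit affine functions of $a$ determined solely by the reduced residues $(\ell p+k'q)\bmod pq$, and the claimed proportionality becomes a finite identity about the residue system modulo $pq$ for coprime $p,q$. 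At $b=b_0$ the period $aq$ equals $2/g$, so when $g=1$ it is exactly $2$, $P$ is the triangle wave $t\mapsto\sum_n B_2(2n+t)$, and the computation is a direct generalization of the $5\times6$ computation in Theorem~\ref{thm:NEWnonFrame}.

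I expect two obstacles. The first is proving the proportionality of the $(p-1)/2$ difference vectors uniformly in $m$ and $k$: in Theorem~\ref{thm:NEWnonFrame} this is a hand verification of two $6$-vectors, but in general it encodes how the reduced residues $\set{(\ell p+k'q)\bmod pq}$ interleave, and I expect this number-theoretic bookkeeping to be the delicate step. The second, and more serious, is the non-coprime case $g=\gcd(2k+1,2m+1)>1$, whose smallest instance is $(m,k)=(4,7)$: there the period $aq=2/g<2$ at $b=b_0$, the periodizations carry $g+1$ overlapping terms everywhere on the segment, the one-term localization above fails, and $P$ is instead a small-amplitude piecewise-linear periodic function that must be handled through its full sum. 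My plan would be to settle the coprime case $g=1$ first by the route above, and then attack $g>1$ separately, e.g.\ by rescaling the lattice so that the $aq$-periodization of $B_2$ is re-expressed through a denser partial partition of unity in the spirit of Lemma~\ref{lem:partly-pou} --- which is presumably what makes the symbolic computations underlying the conjecture tractable.
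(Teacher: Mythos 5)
You should first be aware that the paper contains no proof of this statement: it is stated as a conjecture, supported only by exact symbolic (Maple) computations of the Zibulski--Zeevi representation and verified solely in the single case $(m,k)=(1,2)$ by Theorem~\ref{thm:NEWnonFrame}. So there is nothing in the paper to match your argument against; a complete proof would be new. Your plan is a natural generalization of the proof of Theorem~\ref{thm:NEWnonFrame}, and its structural observations are correct: $m<k\le 2m$, $p=(2k+1)/g$ odd with $p\ge 5$, $q=2(2m+1)/g$ even, the identity $p/b=aq$ together with the evenness of $B_2$ showing that row $p-k'$ of $\Psi^{B_2}(0,0)$ is row $k'$ composed with the column involution $\ell\mapsto (q-\ell)\bmod q$, and the fact that the conjectured interval specializes to $\itvcc{\frac73}{\frac83}$ when $(m,k)=(1,2)$.

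However, as you yourself flag, the proposal is a program rather than a proof, and the gaps are exactly the substantive steps. The antisymmetry you derive only places each difference $R_{k'}-R_{p-k'}$ in a subspace of dimension $q/2-1$, which is $1$-dimensional only when $q=4$; for general $q$ the asserted collinearity of the $(p-1)/2$ difference vectors, with a direction $v$ independent of $a$ across the whole segment, is a strong claim that nothing in your argument establishes --- in Theorem~\ref{thm:NEWnonFrame} it is a hand computation of two specific $6$-vectors, and it is precisely this computation that the conjecture extrapolates. Likewise, the one-term localization of the $aq$-periodizations at all sampling points of rows $1,\dots,p-1$, valid exactly for $b\in\itvcc{b_0-a_0\frac{k-m}{2}}{b_0+a_0\frac{k-m}{2}}$, is asserted rather than proved (the claim that the endpoints are ``precisely the thresholds'' is a guess), and you concede it fails outright when $g=\gcd(2k+1,2m+1)>1$, e.g.\ $(m,k)=(4,7)$, a case your plan leaves entirely open even though it is part of the statement. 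So the proposal is a reasonable route toward the conjecture --- essentially the route the authors' own computations suggest --- but it does not constitute a proof of it.
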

The conjecture is verified for the case $m=1$ and $k=2$ by Theorem~\ref{thm:NEWnonFrame}.


 \smallskip
 \paragraph{Acknowledgments.}

 The authors would like to thank Ole Christensen for posing the
 problem of characterizing the frame set of B-splines in a talk at the
 Technical University of Denmark on February 3, 2015, that initiated
 the work presented in this paper. The authors would also like to
 thank Karlheinz Gr\"ochenig for comments improving the presentation
 of the paper.

\def\cprime{$'$} \def\cprime{$'$} \def\cprime{$'$}
  \def\uarc#1{\ifmmode{\lineskiplimit=0pt\oalign{$#1$\crcr
  \hidewidth\setbox0=\hbox{\lower1ex\hbox{{\rm\char"15}}}\dp0=0pt
  \box0\hidewidth}}\else{\lineskiplimit=0pt\oalign{#1\crcr
  \hidewidth\setbox0=\hbox{\lower1ex\hbox{{\rm\char"15}}}\dp0=0pt
  \box0\hidewidth}}\relax\fi} \def\cprime{$'$} \def\cprime{$'$}
  \def\cprime{$'$} \def\cprime{$'$} \def\cprime{$'$} \def\cprime{$'$}


 \end{document}